\renewcommand{\(}{\left\(}
\renewcommand{\)}{\right\)}
\renewcommand{\[}{\left\[}
\renewcommand{\]}{\right\]}
\numberwithin{equation}{section}
\theoremstyle{plain}
\newtheorem{theorem}{Theorem}[section]
\newtheorem{lemma}[theorem]{Lemma}
\newtheorem{remark}[theorem]{Remark}
\newtheorem{problem}[theorem]{Problem}
\newtheorem{corollary}[theorem]{Corollary}
\def\proof{\@ifnextchar[{\@oproof}{\@nproof}}
\def\@oproof[#1][#2]{\trivlist\item[\hskip\labelsep\textit{#2 Proof of\
		#1.}~]\ignorespaces}
\def\@nproof{\trivlist\item[\hskip\labelsep\textit{Proof.}~]\ignorespaces}
\begin{document}
	
	\title{Inequalities for the overpartition function arising from determinants} 
	
	\author{Gargi Mukherjee}
	\address{Institute for Algebra, Johannes Kepler University, Altenberger Straße 69, A-4040 Linz, Austria.}
	\email{gargi.mukherjee@dk-compmath.jku.at}
	
	\maketitle
	
	\begin{abstract}
Let $\overline{p}(n)$ denote the overpartition funtion. This paper presents the $2$-$\log$-concavity property of $\overline{p}(n)$ by considering a more general inequality of the following form
\begin{equation*}
\begin{vmatrix}
\overline{p}(n) & \overline{p}(n+1) & \overline{p}(n+2) \\ 
\overline{p}(n-1) & \overline{p}(n) & \overline{p}(n+1) \\ 
\overline{p}(n-2) & \overline{p}(n-1) & \overline{p}(n)
\end{vmatrix} > 0,
\end{equation*}
which holds for all $n \geq 42$.
	\end{abstract}

\hspace{0.65 cm} \textbf{Mathematics Subject Classifications.} Primary 05A20; 11C20.\\
\vspace{0.3 cm}
\hspace{0.8 cm} \textbf{Keywords.} Overpartition; determinant; 2-log-concavity.
\section{Introduction}
A sequence $a_{0},a_{1},\dots,a_{n}$ of real numbers is said to be unimodal if for some $0\leq k\leq n$ we have $a_{0}\leq a_{1}\leq \dots \leq a_{k-1}\leq a_{k} \geq a_{k+1}\geq\dots\geq a_{n}$ and $\log$-concave if $a^2_{j}\geq a_{j-1}a_{j+1}$ for all $1\leq j\leq n-1$. Note that a $\log$-concave sequence of positive numbers is unimodal. We say that the sequence $a_{0},a_{1},\dots,a_{n}$ has no internal zeros if there do not exist integers $0\leq i<j<k\leq n$ satisfying $a_{i} \neq 0$, $a_{j} = 0$ and $a_{k} \neq 0$. Then a nonnegative $\log$-concave sequence with no internal zeroes is unimodal. The study of $\log$-concavity problems of sequences shares an intimate connection with zeros of polynomials, due to Newton through the following result; see for example \cite[p. 52]{HLP}.
\begin{theorem}\label{Newton}
	Let
	\begin{equation*}
	P(x)=\sum_{j=0}^{n}\binom{n}{j}a_{j}x^j
	\end{equation*}
be a (real) polynomial with real zeros. Then $a^2_{j}\geq a_{j-1}a_{j+1}$ for all $j$.
\end{theorem}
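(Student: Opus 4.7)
The plan is to reduce the inequality $a_j^2 \geq a_{j-1} a_{j+1}$ to the nonnegativity of the discriminant of a quadratic polynomial that still has only real zeros, by alternating two operations that each preserve the reality of all zeros: differentiation and the reversal $P(x) \mapsto x^{\deg P} P(1/x)$.

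I would begin by recording the two preservation facts. By Rolle's theorem, between any two consecutive real zeros of $P$ there lies a real zero of $P'$, and a standard multiplicity count then shows that $P'$ again has only real zeros; iterating this gives the same conclusion for every derivative $P^{(k)}$. For the reversal, assuming for the moment that $P(0) \neq 0$, the nonzero roots of $P^*(x) := x^n P(1/x)$ are precisely the reciprocals of the roots of $P$, so $P^*$ also has only real zeros.

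Next comes the bookkeeping step: for $P(x) = \sum_{j=0}^n \binom{n}{j} a_j x^j$, one checks by a direct computation that
\begin{equation*}
\frac{(n-k)!}{n!}\, P^{(k)}(x) \;=\; \sum_{m=0}^{n-k} \binom{n-k}{m}\, a_{m+k}\, x^m,
\end{equation*}
so that $k$-fold differentiation, read in the normalized basis $\binom{n-k}{m} x^m$, simply shifts the sequence $(a_j)$ by $k$ positions; similarly, reversal in the normalized basis sends $(a_0, a_1, \ldots, a_n)$ to $(a_n, a_{n-1}, \ldots, a_0)$. Armed with this, fix $j$ and perform: differentiate $P$ exactly $j-1$ times, then reverse, then differentiate $n-j-1$ more times. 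Tracking the normalized coefficients, the output is (up to a positive constant) the quadratic $a_{j-1} x^2 + 2 a_j x + a_{j+1}$, which still has only real zeros; its discriminant being nonnegative is exactly the desired inequality $a_j^2 \geq a_{j-1} a_{j+1}$.

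The main obstacle is handling degeneracies, since if some intermediate normalized coefficient vanishes the reversal can drop the degree and the clean ``roots are reciprocals'' description breaks down. I would dispatch this either by factoring out the largest power of $x$ dividing $P$ (so we may assume the relevant endpoints are nonzero) and treating the resulting zero roots separately, or more efficiently by a perturbation argument: slightly perturb the real zeros of $P$ to make all normalized coefficients nonzero, apply the above reduction to the perturbed polynomial, and then pass to the limit, noting that the inequality $a_j^2 \geq a_{j-1} a_{j+1}$ is closed.
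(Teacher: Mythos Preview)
The paper does not actually prove this theorem: it is quoted as a classical result with a pointer to Hardy--Littlewood--P\'olya \cite[p.~52]{HLP}, and no argument is given in the text. So there is no ``paper's own proof'' to compare against.

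That said, your proposal is the standard proof of Newton's inequalities and is correct. The two operations you use---differentiation and the reversal $P(x)\mapsto x^{\deg P}P(1/x)$---do preserve real-rootedness, and the bookkeeping identity
\[
\frac{(n-k)!}{n!}\,P^{(k)}(x)=\sum_{m=0}^{n-k}\binom{n-k}{m}a_{m+k}\,x^m
\]
is exactly what makes the normalized coefficient sequence shift cleanly. Differentiating $j-1$ times, reversing, and differentiating $n-j-1$ more times indeed produces (up to a positive scalar) the quadratic $a_{j-1}x^2+2a_jx+a_{j+1}$, whose nonnegative discriminant is the claim. Your handling of the degenerate cases is also the right move: either factor out the maximal power of $x$ before reversing, or perturb the (real) zeros slightly so that all intermediate leading and constant coefficients are nonzero and then pass to the limit in the closed inequality $a_j^2\ge a_{j-1}a_{j+1}$. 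One minor point worth making explicit in a write-up is that when $a_{j-1}=0$ the ``quadratic'' is actually linear or constant, but then the inequality $a_j^2\ge 0=a_{j-1}a_{j+1}$ is automatic, so this boundary case is harmless.
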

We call a real polynomial $P(x) = \sum_{j=0}^{n}a_{j}x^j$ $\log$-concave if its coefficient sequence is $\log$-concave. More generally, the following theorem provides a necessary condition for a real polynomial to have only real zeros.
\begin{theorem}$($\cite[Theorem 1.2.1]{Brenti}$)$\label{Brenti}
Let $P(x)=\sum_{i=0}^{d}a_{j}x^j$ be a polynomial with nonnegative coefficients with only real zeros. Then the sequence $\{a_k\}_{0\leq k \leq d}$ is $\log$-concave with no internal zeros; in particular, it is unimodal.
\end{theorem}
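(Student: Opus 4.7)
The plan is to deduce the theorem from Newton's inequality (Theorem \ref{Newton}) together with an elementary sign analysis of the roots of $P$.

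For the ``no internal zeros'' part, I would first observe that since every coefficient $a_j$ is nonnegative, $P(\alpha) = \sum_j a_j \alpha^j \geq 0$ for every $\alpha > 0$, with equality only when $P \equiv 0$. Hence a nonzero polynomial with nonnegative coefficients has no positive real zero, and under the real-rootedness hypothesis every zero of $P$ lies in $(-\infty, 0]$. Writing the root at $0$ with multiplicity $k$ and the strictly negative roots as $-\beta_1, \dots, -\beta_{d-k}$ with $\beta_i > 0$, we obtain
\begin{equation*}
P(x) \;=\; a_d\, x^k \prod_{i=1}^{d-k}(x + \beta_i) \;=\; a_d \sum_{m=0}^{d-k} e_{d-k-m}(\beta_1,\dots,\beta_{d-k})\, x^{k+m},
\end{equation*}
so that $a_j = 0$ for $0 \leq j < k$ and $a_j = a_d\, e_{d-j}(\beta_1,\dots,\beta_{d-k}) > 0$ for $k \leq j \leq d$. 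The nonzero coefficients thus form a single contiguous block, yielding the no-internal-zeros claim.

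For log-concavity, I would rewrite $P$ in the binomial basis, $P(x) = \sum_{j=0}^{d}\binom{d}{j} c_j\, x^j$ with $c_j := a_j/\binom{d}{j}$, and invoke Theorem \ref{Newton} on the sequence $(c_j)$ to get $c_j^2 \geq c_{j-1} c_{j+1}$ for $1 \leq j \leq d-1$. Translating back via the elementary identity $\binom{d}{j}^2 / (\binom{d}{j-1}\binom{d}{j+1}) = (j+1)(d-j+1)/(j(d-j)) \geq 1$ produces
\begin{equation*}
a_j^2 \;\geq\; \frac{(j+1)(d-j+1)}{j(d-j)}\, a_{j-1} a_{j+1} \;\geq\; a_{j-1} a_{j+1},
\end{equation*}
which is log-concavity of $(a_j)$. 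Unimodality then follows from the general fact recorded in the introduction: a nonnegative log-concave sequence with no internal zeros is unimodal.

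There is no substantive obstacle here; the entire argument rests on Newton's inequality together with the elementary verification that the binomial-coefficient ratio $\binom{d}{j}^2 / (\binom{d}{j-1}\binom{d}{j+1})$ exceeds $1$ for $1 \leq j \leq d-1$, so the proof is short and essentially mechanical.
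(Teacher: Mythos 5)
The paper offers no proof of this statement at all --- it is quoted verbatim from Brenti \cite[Theorem 1.2.1]{Brenti} --- so there is no internal argument to compare against; what you have written is a correct, self-contained proof, and it is essentially the classical deduction found in the literature. Your sign analysis is sound: nonnegative coefficients force every zero of a nonzero $P$ into $(-\infty,0]$, the factorization $P(x)=a_d x^k\prod_{i}(x+\beta_i)$ gives $a_j=a_d\,e_{d-j}(\beta_1,\dots,\beta_{d-k})>0$ for $k\le j\le d$ and $a_j=0$ below, so the support is a contiguous block and there are no internal zeros. The log-concavity step is also correct: rewriting $P$ in the binomial basis and applying Theorem \ref{Newton} to $c_j=a_j/\binom{d}{j}$, then using $\binom{d}{j}^2\big/\bigl(\binom{d}{j-1}\binom{d}{j+1}\bigr)=\frac{(j+1)(d-j+1)}{j(d-j)}\ge 1$ together with $a_{j-1}a_{j+1}\ge 0$, yields $a_j^2\ge a_{j-1}a_{j+1}$; unimodality then follows from the fact recorded in the paper's introduction that a nonnegative log-concave sequence with no internal zeros is unimodal. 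The only loose end is your implicit assumption that $a_d>0$: if $\deg P<d$ (trailing zero coefficients) the displayed factorization should be taken with respect to the actual degree, after which the trailing zeros harm neither log-concavity (the relevant products $a_{j-1}a_{j+1}$ vanish) nor the no-internal-zeros property; the case $P\equiv 0$ is trivial. With that one-sentence remark added, your argument is complete and is, in substance, the standard proof of the cited result.
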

Before we state a more general version of Theorem \ref{Brenti}, let us introduce the theory of total positivity in brevity. A matrix $A$ with entries in real number is called totally positive if the determinant of each of its minors is nonnegative. For a sequence $(a_{k})_{k\geq 0}$, define its Toeplitz matrix $T$ by
\begin{equation}
T = \begin{pmatrix}
a_0 & & & & \\
a_1 & a_0 & & & \\
a_2 & a_1 & a_0 & & \\
a_3 & a_2 & a_1 & a_0 \\
\vdots & \vdots & \vdots & \vdots & \ddots
\end{pmatrix}.
\end{equation}
We say that $(a_{k})_{k\geq 0}$ is a totally positive sequence or P\'{o}lya frequency sequence if its associated Toeplitz matrix is totally positive. The following theorem shares a deep entanglement between combinatorics of zeros of polynomials and total positivity of its coefficient sequence.
\begin{theorem}[Aissen et al. \cite{AESW}]\label{Aissen}
	Let $P(x)=\sum_{i=0}^{d}a_{j}x^j$ be a real polynomial with nonnegative coefficients. Then $P(x)$ has only real zeros if and only if its coefficient sequence is a P\'{o}lya frequency sequence.
\end{theorem}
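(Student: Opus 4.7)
The plan is to prove the two implications separately. The forward direction reduces to the multiplicative structure of Toeplitz matrices; the converse is the harder direction and will proceed by inductively extracting real linear factors.

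For the forward direction, the first observation is that a real polynomial with nonnegative coefficients and only real zeros must have all zeros nonpositive, so one may factor
\begin{equation*}
P(x)=a_d\prod_{i=1}^{d}(x+r_i),\qquad r_i\ge 0.
\end{equation*}
The coefficient sequence of each linear factor $x+r_i$ is $(r_i,1,0,0,\dots)$, whose lower-triangular Toeplitz matrix is bidiagonal with nonnegative entries and is therefore totally positive, since every minor is either zero or a nonnegative monomial in the entries. The key identity is that convolution of sequences corresponds to multiplication of their lower-triangular Toeplitz matrices, $T_{a\ast b}=T_{a}T_{b}$, and the coefficient sequence of a product of polynomials is exactly the convolution of the individual coefficient sequences. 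Combining this with the Cauchy--Binet formula, which implies that a product of totally positive matrices is totally positive, one concludes that the Toeplitz matrix of $P$ is totally positive, so its coefficient sequence is a P\'olya frequency sequence.

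For the converse, the plan is to induct on $d$, extracting one real root at a time. Total positivity of the Toeplitz matrix forces the sequence $(a_0,\dots,a_d)$ to be log-concave with no internal zeros, so the ratios $a_{k+1}/a_k$ form a nonincreasing sequence of nonnegative numbers. A candidate parameter $\alpha\ge 0$ for the factor $(1+\alpha x)$ is then extracted from these ratios, and one verifies that $Q(x):=P(x)/(1+\alpha x)$ has nonnegative coefficients. The essential step is to show that the Toeplitz matrix of $Q$ is again totally positive: using the multiplicative identity $T_P=T_{1+\alpha x}T_Q$, the bidiagonal matrix $T_{1+\alpha x}$ can be inverted in a sparse manner, and each minor of $T_Q$ may then be expressed, through Cauchy--Binet, as an alternating combination of minors of $T_P$. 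The correct choice of $\alpha$ makes this combination nonnegative. Iterating produces a factorization $P(x)=C\prod_{i}(1+\alpha_i x)$ with $\alpha_i\ge 0$, so the zeros $-1/\alpha_i$ of $P$ are all real.

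The main obstacle is precisely this verification that $T_Q$ inherits total positivity from $T_P$. The forward direction only uses that the product of totally positive matrices is totally positive, but the converse requires a genuine inverse operation at the level of Toeplitz matrices, which is the content of the Aissen--Schoenberg--Whitney construction. This is where the hypothesis of total positivity is used in its full strength, via nontrivial higher-order minor identities rather than just the $2\times 2$ log-concavity inequalities; the reduction to a polynomial of one smaller degree must be checked minor-by-minor, and the nonnegativity of the resulting alternating sums is what forces the extracted $\alpha$ to behave as a genuine root-inverse of $P$.
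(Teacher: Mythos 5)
The first thing to note is that the paper does not prove this statement at all: it is quoted as a classical theorem of Aissen, Edrei, Schoenberg and Whitney (cited as \cite{AESW}), so there is no in-paper argument to compare against and your proposal must stand on its own. Your forward direction does stand: a polynomial with nonnegative coefficients cannot vanish at a positive argument, so all zeros are nonpositive and the factorization $P(x)=a_d\prod_i(x+r_i)$ with $r_i\ge 0$ is legitimate; each bidiagonal Toeplitz factor is totally nonnegative, the identity $T_{a\ast b}=T_aT_b$ holds for lower-triangular Toeplitz matrices, and Cauchy--Binet closes the argument. That is the standard (easy) half of the theorem.

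The converse, however, is only a plan, and the step you yourself single out as essential is never carried out. You do not specify $\alpha$ beyond ``extracted from these ratios,'' and you prove neither that $Q(x)=P(x)/(1+\alpha x)$ has nonnegative coefficients nor that $T_Q$ is totally nonnegative; the sentence ``the correct choice of $\alpha$ makes this combination nonnegative'' is precisely the content of the Aissen--Schoenberg--Whitney theorem, restated rather than established. Concretely, the inverse of $T_{1+\alpha x}$ is the Toeplitz matrix of the geometric sequence $((-\alpha)^k)_{k\ge 0}$, which has alternating signs, so the Cauchy--Binet expansion of a minor of $T_Q$ is an alternating sum of minors of $T_P$, and nothing in your argument controls its sign; even the nonnegativity of the coefficients $b_j=\sum_{i\le j}(-\alpha)^{j-i}a_i$ of $Q$ is left unverified. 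The $2\times 2$ minors only yield log-concavity with no internal zeros, and that is genuinely insufficient for the conclusion: $1+x+x^2$ is log-concave with positive coefficients yet has no real zeros, so the higher-order minors must enter in an essential, quantitative way --- exactly the part your outline defers. As written, the converse direction is a gap; to close it you would either have to carry out the minor-by-minor induction in full (essentially reconstructing the original argument for finite totally positive sequences) or simply cite the theorem, as the paper does.
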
  
Associated to a real polynomial $P(x)=\sum_{i=0}^{d}a_{j}x^j$ with only real zeros, a sequence $(\gamma_{k})_{k\geq 0}$ is called multiplier sequence if the corresponding polynomial $P_{\gamma}(x)=\sum_{i=0}^{d}\gamma_{j}a_{j}x^j$ also has only real zeros. This theory laid its foundation in the seminal work of P\'{o}lya and Schur \cite{PS}. In this context, Craven and Csordas obtained the following theorem that further reduces to $2$-$\log$-concavity $(\text{or double Tur\'{a}n inequality})$ of a multiplier sequence. 
\begin{theorem}[Theorem 2.13, \cite{CS}]\label{CS}
	If $(\gamma_{k})_{k\geq 0}$ with $\gamma_{k}>0$, is a multiplier sequence, then 
	\begin{equation}\label{CSeqn}
	\begin{vmatrix}
	\gamma_{k} & \gamma_{k+1} & \gamma_{k+2}\\
	\gamma_{k-1} & \gamma_{k} & \gamma_{k+1}\\
	\gamma_{k-2} & \gamma_{k-1} & \gamma_{k}\\
	\end{vmatrix} \geq 0, \ \ \text{for}\ k \in \mathbb{Z}_{\geq 2}.
	\end{equation}
\end{theorem}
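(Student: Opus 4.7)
I expand the determinant along its first row to obtain
\begin{equation*}
D := \det\begin{pmatrix}\gamma_k & \gamma_{k+1} & \gamma_{k+2}\\ \gamma_{k-1} & \gamma_k & \gamma_{k+1}\\ \gamma_{k-2} & \gamma_{k-1} & \gamma_k\end{pmatrix} = \gamma_k^3 - 2\gamma_{k-1}\gamma_k\gamma_{k+1} + \gamma_{k-2}\gamma_{k+1}^2 + \gamma_{k-1}^2\gamma_{k+2} - \gamma_{k-2}\gamma_k\gamma_{k+2}.
\end{equation*}
Letting $T_j := \gamma_j^2 - \gamma_{j-1}\gamma_{j+1}$ denote the classical Tur\'{a}n expression, a direct algebraic expansion verifies the pivotal identity
\begin{equation*}
T_k^2 - T_{k-1}T_{k+1} \ = \ \gamma_k \cdot D.
\end{equation*}
Since $\gamma_k > 0$ by hypothesis, the inequality $D \geq 0$ is therefore equivalent to the log-concavity of the Tur\'{a}n sequence, $T_k^2 \geq T_{k-1}T_{k+1}$.

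\textbf{Step 2: Multiplier sequence inputs.} By an equivalent form of the P\'{o}lya--Schur theorem, $(\gamma_k)_{k\geq 0}$ with $\gamma_k > 0$ is a multiplier sequence iff every Jensen polynomial $g_n(x) := \sum_{k=0}^{n} \binom{n}{k}\gamma_k x^k$ has only real (necessarily non-positive) zeros, equivalently iff the entire function $\phi(z) := \sum_{k} \gamma_k z^k/k!$ belongs to the Laguerre--P\'{o}lya class. This has two immediate consequences. First, applying Theorem \ref{Newton} directly to $g_n$ yields the first-order Tur\'{a}n inequality $T_k \geq 0$. Second, differentiating $g_n$ and invoking Rolle's theorem shows that every shifted sequence $(\gamma_{k+j})_{k\geq 0}$ is again a positive multiplier sequence.

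\textbf{Step 3: Log-concavity of $(T_k)$.} The heart of the matter is upgrading $T_k \geq 0$ to the second-order inequality $T_k^2 \geq T_{k-1}T_{k+1}$. This cannot come from a naive iteration of Newton, because $(T_k)$ itself need not be a multiplier sequence: taking, e.g., $\phi(z) = e^z(1+z)^2$ gives $\gamma_k = k^2 + k + 1$ and $T_k = 2(k^2 + k - 1)$, whose associated quadratic has a positive real zero. My plan is instead to exploit the Laguerre--P\'{o}lya structure of $\phi$ directly: for $\phi$ in this class, a second-order (``double'') Tur\'{a}n inequality holds on the entire real line, and applying it to the shifted function $\phi^{(k-2)}$ (which remains in the class) and evaluating at $z=0$ is designed to produce exactly $T_k^2 \geq T_{k-1}T_{k+1}$. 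An alternative route is to feed the multiplier operator a carefully engineered real-rooted test polynomial and extract the inequality via Theorem \ref{Aissen} applied to the resulting coefficient sequence.

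The main obstacle is Step 3. Steps 1 and 2 follow routinely from the P\'{o}lya--Schur theorem and Newton's inequality, but Step 3 demands a genuine second-order positivity result --- a double Tur\'{a}n inequality for the Laguerre--P\'{o}lya class --- which goes beyond what Theorem \ref{Newton} provides when applied to any single Jensen polynomial.
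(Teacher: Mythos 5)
Your Step 1 is correct: the expansion of the determinant and the identity $T_k^2-T_{k-1}T_{k+1}=\gamma_k\cdot D$ check out, and this is exactly the equivalence the paper records immediately after the theorem (the determinant inequality is the same as $2$-log-concavity of $(\gamma_k)$ because $\gamma_k>0$). Step 2 is also standard. But note that the paper does not prove this statement at all --- it is quoted from Craven and Csordas \cite{CS} (their Theorem 2.13) --- so the only question is whether your argument is self-contained, and it is not.

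The genuine gap is Step 3, and you have correctly identified it yourself: the assertion that ``for $\phi$ in the Laguerre--P\'{o}lya class a second-order (double) Tur\'{a}n inequality holds on the entire real line, and evaluating it for $\phi^{(k-2)}$ at $z=0$ produces $T_k^2\geq T_{k-1}T_{k+1}$'' is not a lemma you may invoke; it \emph{is} the theorem to be proved, merely restated in function-theoretic language (since $\phi^{(j)}(0)=\gamma_j$, the coefficient inequality and the inequality for derivatives at the origin are the same statement). Newton's inequality (Theorem \ref{Newton}) and the Laguerre inequality $\phi'^2-\phi\phi''\geq 0$ only give the first-order bound $T_k\geq 0$, and, as your own example $\phi(z)=e^z(1+z)^2$ shows, one cannot bootstrap by treating $(T_k)$ as a multiplier sequence. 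The second-order positivity needed here is a substantially harder fact --- it is precisely the content of Craven--Csordas's work, whose proof requires a careful analysis of Laguerre--P\'{o}lya functions with nonnegative coefficients and does not follow from Theorem \ref{Newton}, Theorem \ref{Aissen}, or a Rolle/differentiation argument applied to a single Jensen polynomial. Your ``alternative route'' via a carefully engineered test polynomial is likewise only a plan, with no candidate polynomial exhibited. As it stands, the proposal reduces the statement to an equivalent form and then cites (implicitly) the very result being proved, so the proof is incomplete at its essential step.
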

Since  $\gamma_{k}>0$, \eqref{CSeqn} is equivalent to the following inequality
\begin{equation*}
(\gamma^2_{k}-\gamma_{k-1}\gamma_{k+1})^2-(\gamma^2_{k-1}-\gamma_{k-2}\gamma_{k})(\gamma^2_{k+1}-\gamma_{k}\gamma_{k+2})\geq 0,
\end{equation*}
which amounts to say that $(\gamma_{k})_{k\geq 0}$ is $2$-$\log$-concave.\\
Now we turn to discuss briefly about partitions through the lens of combinatorial analysis, as discussed in the previous paragraph. A partition of a positive integer $n$ is a nonincreasing sequence $(\lambda_{1},\lambda_{2},\dots,\lambda_{r})$ of positive integers with $\lambda_{1}+\lambda_{2}+\dots+\lambda_{r}=n$ and $p(n)$ denotes the number of partitions of $n$. The systematic study of partitions dates back to Euler. Rigorous analytic approach comes into play in the theory of partitions since the foundational work of Hardy and Ramanujan \cite{RamanujanHardy}. Hardy and Ramanujan employed the celebrated circle method in order to explicitly describe the asymptotics of $p(n)$, specifically, given by
\begin{equation}\label{HardyRamanujan}
p(n) \sim \frac{1}{4\sqrt{3}n}e^{\pi\sqrt{\frac{2n}{3}}} \ \ (n \rightarrow \infty).
\end{equation} 
Later Rademacher \cite{Rademacher} refined the formulation of Hardy and Ramanujan to set a convergent series expression for $p(n)$ and an error bound was given due to Lehmer \cite{Lehmer}. Log-concavity of $p(n)$ has been studied independently by Nicolas \cite{Nicolas} and by DeSalvo and Pak \cite{DeSalvoPak} by confirming a conjecture of Chen \cite{Chentalk}. Since then the study on inequalities of the partition function from combinatorial analysis perspective has been documented in the works of Chen et al. \cite{Chen2}, \cite{Chen}. Recently Griffin, Ono, Rolen and Zagier \cite{GORZ} consider a more general paradigm to trace the zeros of a certain polynomial, called Jensen polynomials associated with $p(n)$, defined by
\begin{equation*}
J^{d,n}_{p}(x) = \sum_{j=0}^{d}\binom{d}{j}p(n+j)x^j.
\end{equation*}
For a more detail study on hyperbolicity of $J^{d,n}_{p}(x)$, see \cite{LarsonWagner}.\\
Corteel and Lovejoy \cite{CorteelLovejoy} initiated a broad generalization of partitions, called overpartition that offers a panorama of combinatorial perspective of basic hypergeometric series. An overpartition of $n$ is a nonincreasing sequence of natural numbers whose sum is $n$ in which the first occurrence (equivalently, the final occurrence) of a number
may be overlined and $\overline{p}(n)$ denotes the number of overpartitions of $n$. For convenience, define $\overline{p}(0)=1$. For example, there are $8$ overpartitions of $3$ enumerated by $3, \overline{3}, 2+1, \overline{2}+1, 2+\overline{1}, \overline{2}+\overline{1},1+1+1, \overline{1}+1+1$. Similar to the Hardy-Ramanujan-Rademacher type formula for $\overline{p}(n)$, Zuckerman \cite{Zuckerman} showed that
\begin{equation}\label{Zuckerman}
\overline{p}(n)=\frac{1}{2\pi}\underset{2 \nmid k}{\sum_{k=1}^{\infty}}\sqrt{k}\underset{(h,k)=1}{\sum_{h=0}^{k-1}}\dfrac{\omega(h,k)^2}{\omega(2h,k)}e^{-\frac{2\pi i n h}{k}}\dfrac{d}{dn}\biggl(\dfrac{\sinh \frac{\pi \sqrt{n}}{k}}{\sqrt{n}}\biggr),
\end{equation}
where
\begin{equation*}
\omega(h,k)=\text{exp}\Biggl(\pi i \sum_{r=1}^{k-1}\dfrac{r}{k}\biggl(\dfrac{hr}{k}-\biggl\lfloor\dfrac{hr}{k}\biggr\rfloor-\dfrac{1}{2}\biggr)\Biggr)
\end{equation*}
for positive integers $h$ and $k$. In order to prove $\log$-concavity of $\overline{p}(n)$, Engel \cite{Engel} provided an error term for $\overline{p}(n)$
\begin{equation}\label{Engel1}
\overline{p}(n)=\frac{1}{2\pi}\underset{2 \nmid k}{\sum_{k=1}^{N}}\sqrt{k}\underset{(h,k)=1}{\sum_{h=0}^{k-1}}\dfrac{\omega(h,k)^2}{\omega(2h,k)}e^{-\frac{2\pi i n h}{k}}\dfrac{d}{dn}\biggl(\dfrac{\sinh \frac{\pi \sqrt{n}}{k}}{\sqrt{n}}\biggr)+R_{2}(n,N),
\end{equation}
where
\begin{equation}\label{Engel2}
\bigl|R_{2}(n,N)\bigr|< \dfrac{N^{5/2}}{\pi n^{3/2}} \sinh \biggl(\dfrac{\pi \sqrt{n}}{N}\biggr).
\end{equation}
Along the lines of works of Chen et al. in context of the partition function, somewhat similar research works on inequalities for $\overline{p}(n)$ has already been recorded in \cite{WangXieZhang} and \cite{LiuZhang}.\\
Jia and Wang \cite{JiaWang} examined determinantal inequalities for $p(n)$ arising from the theory of total positivity, by set up the following theorem. As a corollary of Theorem \ref{JiaWang}, they proved $2$ $\log$-concavity of $p(n)$.
\begin{theorem}[Theorem 1.5, \cite{JiaWang}]\label{JiaWang}
	Let $p(n)$ denote the partition function and 
	\begin{equation}\label{JiaWangeqn1}
	M_{3}(p(n)) = \begin{pmatrix}
	p(n) & p(n+1) & p(n+2) \\ 
	p(n-1) & p(n) & p(n+1) \\ 
	p(n-2) & p(n-1) & p(n)
	\end{pmatrix}
	\end{equation}
Then for all $n \geq 222$, we have 
\begin{eqnarray}\label{JiaWangeqn2}
\det M_{3}(p(n)) &> &0.
\end{eqnarray}
\end{theorem}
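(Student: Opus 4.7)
My plan is to reduce the determinantal inequality to the strict $2$-log-concavity of $p(n)$, and then to establish the latter through a careful use of the Hardy-Ramanujan-Rademacher asymptotic expansion. Expanding $\det M_{3}(p(n))$ along its first row and comparing with the double Tur\'{a}n discriminant
\[
D(n) := \bigl(p(n)^2 - p(n-1)p(n+1)\bigr)^2 - \bigl(p(n-1)^2 - p(n-2)p(n)\bigr)\bigl(p(n+1)^2 - p(n)p(n+2)\bigr)
\]
yields the identity $D(n) = p(n) \cdot \det M_{3}(p(n))$. Since $p(n) > 0$, the theorem is equivalent to $D(n) > 0$ for all $n \geq 222$, which is the double Tur\'{a}n inequality discussed after Theorem \ref{CS}.

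Next, dividing $D(n)$ by $p(n)^{4}$ converts the inequality into a polynomial relation in the ratios $u_{j} := p(n+j)/p(n)$ for $j = \pm 1, \pm 2$. Writing $\mu(n) = \pi\sqrt{2n/3}$, the Hardy-Ramanujan estimate \eqref{HardyRamanujan}, refined via Rademacher's identity (the analogue of \eqref{Zuckerman} for $p(n)$) together with Lehmer's explicit error bound, yields a two-sided estimate of the form
\[
p(n) = \frac{1}{4n\sqrt{3}}\, e^{\mu(n)}\bigl(1 + E(n)\bigr),
\]
in which $E(n)$ admits an explicit bound decaying as a polynomial in $n^{-1/2}$. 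Substituting into each $u_{j}$ and expanding $\mu(n+j) - \mu(n)$ as a Taylor polynomial in $j/n$, I would express $D(n)/p(n)^{4}$ as an explicit analytic function of $n$ plus a rigorously controlled remainder.

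The heart of the argument is a careful inspection of this expansion. A short computation shows that the leading contributions coming from $\mu'$ and $\mu''$ alone cancel identically in the normalized quantity, so one must push the expansion through several further orders in $n^{-1/2}$ before a strictly positive main term emerges. I would then extract an explicit lower bound of the shape $D(n)/p(n)^{4} \geq c\, n^{-\alpha}$ valid for all $n$ beyond some threshold $N_{0}$, with concrete constants $c > 0$ and $\alpha > 0$.

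The main obstacle will be bookkeeping: the exponential cancellations are delicate, and one has to balance sufficiently many orders of the Rademacher truncation error against the Taylor expansion of $\mu(n+j) - \mu(n)$ in order to produce an effective lower bound with an explicit threshold. Once such an $N_{0}$ is obtained, the finitely many remaining cases $222 \leq n < N_{0}$ can be verified by direct numerical evaluation of $\det M_{3}(p(n))$ from tabulated values of $p(n)$, completing the argument.
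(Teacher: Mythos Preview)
This theorem is not proved in the present paper; it is quoted from Jia--Wang \cite{JiaWang} as background. The paper's own contribution is the overpartition analogue (Theorem~\ref{Mainthm}), whose proof follows the Jia--Wang template, so the meaningful comparison is between your sketch and that template.

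Your identity $D(n)=p(n)\cdot\det M_3(p(n))$ is correct, and the overall plan---feed in Rademacher--Lehmer bounds, Taylor-expand $\mu(n+j)-\mu(n)$, isolate a positive main term, then clear finitely many small cases numerically---is the right shape. The difference is in how the ``delicate cancellations'' you flag are organised. You propose to expand $D(n)/p(n)^4$ head-on and push the expansion until a positive leading term appears; this works in principle, but the first several orders do cancel and the bookkeeping becomes the whole proof.

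Jia--Wang (and this paper for $\overline p(n)$) insert an algebraic step that absorbs most of that cancellation before any asymptotics enter. Setting $u_n=p(n-1)p(n+1)/p(n)^2$ and $s(n)=u_{n-1}+u_{n+1}-u_{n-1}u_{n+1}$, one rewrites $\det M_3>0$ as the quadratic inequality $Q(u_n):=s(n)u_n^2-2u_n+1>0$. The task then splits into two sharper subgoals: (i) show $s(n)<1$, so $Q$ has two real roots; (ii) show $u_n$ lies below the smaller root $\varphi(s(n))=(1-\sqrt{1-s(n)})/s(n)$. Each is handled by sandwiching $p(n)$ between explicit functions $B_1(n)<p(n)<B_2(n)$, propagating these to rational bounds $f(n)<u_n<g(n)$, and reducing (i) and (ii) to sign checks of explicit one-variable polynomials in $y=\mu(n)$. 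This keeps every estimate one-dimensional and replaces your multivariate Taylor bookkeeping by a sequence of polynomial positivity verifications.

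In short, your route is sound but the quadratic reformulation $Q(u_n)>0$ is the device that makes the constants and the threshold $n\ge 222$ actually computable.
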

In this paper, our primary goal is to prove $2$-$\log$-concavity
of $\overline{p}(n)$. In order to prove this, we set up a similar device as that of Theorem \ref{JiaWang} but in context of overpartitions. In particular, we shall prove the following result.
\begin{theorem}\label{Mainthm}
Let $\overline{p}(n)$ denote the overpartition funtion. Then for all $n \geq 42$, we have
\begin{equation}\label{Mainthmeqn}
\begin{vmatrix}
\overline{p}(n) & \overline{p}(n+1) & \overline{p}(n+2) \\ 
\overline{p}(n-1) & \overline{p}(n) & \overline{p}(n+1) \\ 
\overline{p}(n-2) & \overline{p}(n-1) & \overline{p}(n)
\end{vmatrix} > 0.
\end{equation}
\end{theorem}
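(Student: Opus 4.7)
I would follow the blueprint that Jia--Wang used for $p(n)$ in Theorem~\ref{JiaWang}. Expanding the determinant in \eqref{Mainthmeqn} along its first row yields
\begin{equation*}
\Delta(n) := \overline{p}(n)^{3} - 2\overline{p}(n-1)\overline{p}(n)\overline{p}(n+1) + \overline{p}(n-2)\overline{p}(n+1)^{2} + \overline{p}(n-1)^{2}\overline{p}(n+2) - \overline{p}(n-2)\overline{p}(n)\overline{p}(n+2),
\end{equation*}
which, by the identity already recorded in the paragraph after \eqref{CSeqn}, is $\overline{p}(n)^{-1}$ times the double Tur\'{a}n form of $\{\overline{p}(n)\}$. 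The plan is (a) to extract sharp two-sided bounds $L(n) \leq \overline{p}(n) \leq U(n)$ from Zuckerman's series \eqref{Zuckerman} coupled with Engel's truncation error \eqref{Engel1}--\eqref{Engel2}, (b) to substitute them into the five cubic summands of $\Delta(n)$, (c) to prove $\Delta(n) > 0$ analytically for $n \geq N_{0}$, and (d) to verify the finitely many residual cases $42 \leq n < N_{0}$ by direct computation.

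For the bounds I would retain only the $k = 1$ contribution
\begin{equation*}
\overline{p}_{1}(n) = \frac{\cosh(\pi\sqrt{n})}{4n} - \frac{\sinh(\pi\sqrt{n})}{4\pi\,n^{3/2}},
\end{equation*}
so that $\overline{p}(n) = \overline{p}_{1}(n) + R$ with $|R|$ controlled by \eqref{Engel2} at $N = 2$ (exponentially smaller than the main term). This produces $L, U$ of the form $\frac{e^{\pi\sqrt{n}}}{8n}\bigl(1 + O(n^{-1/2})\bigr)$ with effective constants. After substituting into $\Delta(n)$, dividing by the common scale $e^{3\pi\sqrt{n}}/(8n)^{3}$, and Taylor-expanding each $\sqrt{n+a}$ about $\sqrt{n}$ together with each $1/(n+a)$ about $1/n$, the determinant becomes a power series in the small parameter $\pi/n^{3/2}$. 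A direct calculation shows that the coefficients at orders $(\pi/n^{3/2})^{0}$, $(\pi/n^{3/2})^{1}$, and $(\pi/n^{3/2})^{2}$ all annihilate across the five summands; the first non-vanishing coefficient is
\begin{equation*}
\frac{1}{192} - \frac{9}{64} + \frac{1}{6} = \frac{1}{32},
\end{equation*}
and consequently
\begin{equation*}
\Delta(n) \;\sim\; \frac{\pi^{3}}{32 \cdot 8^{3}} \cdot \frac{e^{3\pi\sqrt{n}}}{n^{15/2}} \qquad (n \to \infty),
\end{equation*}
which is strictly positive.

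The central obstacle is exactly this triple cancellation: the positive leading signal sits three powers of $n^{-3/2}$ below the naive scale $e^{3\pi\sqrt{n}}/(8n)^{3}$, so the two-sided bounds and the Taylor tails must be correspondingly sharp. Concretely, I would need explicit bounds, each strictly smaller than $\pi^{3}/(32 \cdot 8^{3}) \cdot e^{3\pi\sqrt{n}}/n^{15/2}$ for every $n \geq N_{0}$, on (i) the Taylor remainders in $\sqrt{n+a}$ past order $n^{-9/2}$, (ii) the rational prefactor corrections $(n+i)(n+j)(n+k)/n^{3} - 1 = O(n^{-2})$, and (iii) the contribution of the $k \geq 3$ Kloosterman modes in Zuckerman's series together with the Engel remainder $R_{2}(n,2)$. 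Combining these three estimates with the leading $\pi^{3}/32$ term pins down an explicit $N_{0}$ beyond which the analytic argument is valid; the finite range $42 \leq n < N_{0}$ is then dispatched by computing $\overline{p}(n)$ directly.
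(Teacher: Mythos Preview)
Your plan is sound in outline and would work, but it is a genuinely different route from the paper's. You attack $\Delta(n)$ head-on by asymptotic expansion, accepting the triple cancellation and then racing the error bounds against the surviving $n^{-15/2}$ signal. The paper instead \emph{normalizes away} most of the cancellation before doing any analysis: dividing by $\overline{p}(n)^{3}$ it rewrites the positivity of the determinant as $s(n)u_{n}^{2}-2u_{n}+1>0$, where $u_{n}=\overline{p}(n-1)\overline{p}(n+1)/\overline{p}(n)^{2}$ and $s(n)=u_{n-1}+u_{n+1}-u_{n-1}u_{n+1}$. It then treats this as a quadratic in $u_{n}$ and shows that $u_{n}$ lies strictly below the smaller root $\varphi(s(n))=(1-\sqrt{1-s(n)})/s(n)$. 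This splits the work into two estimates, each of which involves only \emph{ratios} of nearby overpartition values: first $s(n)<1$ (Theorem~\ref{thm2}), then $g(n)<\varphi(s(n))$ (Theorem~\ref{thm3}), where $f(n)<u_{n}<g(n)$ are explicit envelopes coming from the bounds $B_{1}(n)<\overline{p}(n)<B_{2}(n)$ with error $\mu(n)^{-6}$. Because ratios of consecutive $\overline{p}$-values are already $1+O(n^{-1/2})$, the paper never has to stare at a difference of five cubics that cancel to three orders; the delicate part is pushed into a single rational-function inequality in $y=\pi\sqrt{n}$ that is checked by sign of a high-degree polynomial.

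What each buys: your approach is conceptually simpler and makes the asymptotic size of $\Delta(n)$ transparent, but step~(b) as you phrase it (``substitute $L,U$ into the five summands'') would not survive the cancellation---naively replacing positive terms by $L$'s and negative terms by $U$'s produces a lower bound dominated by the bound gap, not by the $\pi^{3}/32$ signal. You would need instead to write $\overline{p}(n+a)=\overline{p}_{1}(n+a)(1+\epsilon_{a})$, compute $\Delta$ exactly for the main terms $\overline{p}_{1}$, and then bound the multilinear perturbation in the $\epsilon_{a}$'s; you gesture at this in your error items (i)--(iii), but it is the real work. The paper's ratio-and-quadratic device sidesteps this bookkeeping entirely, at the cost of a less intuitive reformulation and some heavier symbolic computation at the end.
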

Theorem \ref{Mainthm} straight away implies the $2$-$\log$-concavity of $\overline{p}(n)$, precisely
\begin{theorem}\label{Mainthmcor}
For all $n \geq 42$,
\begin{equation}\label{Mainthmcoreqn}
(\overline{p}(n)^2-\overline{p}(n-1)\overline{p}(n+1))^2-(\overline{p}(n-1)^2-\overline{p}(n-2)\overline{p}(n))\ (\overline{p}(n+1)^2-\overline{p}(n)\overline{p}(n+2))> 0.
\end{equation}
\end{theorem}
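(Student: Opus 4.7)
My strategy parallels Jia and Wang's treatment of the analogous determinant for $p(n)$ in \cite{JiaWang}, adapted to $\overline{p}(n)$ through the Zuckerman--Engel asymptotic formula \eqref{Engel1}--\eqref{Engel2}. Let $D(n)$ denote the determinant on the left of \eqref{Mainthmeqn}. Cofactor expansion followed by a short algebraic manipulation yields
\begin{equation*}
\overline{p}(n)\,D(n)=c_{0}(n)^{2}-c_{-1}(n)\,c_{1}(n),\qquad c_{j}(n):=\overline{p}(n+j)^{2}-\overline{p}(n+j-1)\,\overline{p}(n+j+1),
\end{equation*}
so $D(n)>0$ is equivalent to strict log-concavity of the log-concavity gap sequence $(c_{j}(n))_{j}$ at $j=0$. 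Since the individual $c_{j}(n)$ are positive for large $n$ by the known log-concavity of $\overline{p}$ \cite{Engel}, the inequality expresses a delicate second-order cancellation whose sign must be extracted with quantitative care.

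The first step is to take $N=2$ in \eqref{Engel1}, so that only the $k=1$, $h=0$ term survives in the main sum, and to write $\overline{p}(n)=T(n)+R(n)$ with
\begin{equation*}
T(n):=\frac{1}{2\pi}\frac{d}{dn}\!\left(\frac{\sinh(\pi\sqrt{n})}{\sqrt{n}}\right)=\frac{\cosh(\pi\sqrt{n})}{4n}-\frac{\sinh(\pi\sqrt{n})}{4\pi n^{3/2}},\qquad |R(n)|<\frac{2^{5/2}}{\pi n^{3/2}}\sinh\!\bigl(\tfrac{\pi\sqrt{n}}{2}\bigr).
\end{equation*}
Since $T(n)\sim e^{\pi\sqrt{n}}/(8n)$ while $|R(n)|$ is of order $e^{\pi\sqrt{n}/2}/n^{3/2}$, the error is a factor $\asymp e^{\pi\sqrt{n}/2}$ smaller than the main term. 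Substituting $\overline{p}(n\pm k)=T(n\pm k)+R(n\pm k)$ into $D(n)$ produces a principal part $D_{T}(n):=\det[T(n+j-i)]_{0\le i,j\le 2}$ plus finitely many remainder terms, each containing at least one factor $R(n\pm k)$ and therefore bounded by an explicit multiple of $T(n)^{2}|R(n)|\asymp e^{(5/2)\pi\sqrt{n}}/n^{7/2}$. This is exponentially smaller than the expected size of $D_{T}(n)$ derived below, so the entire problem reduces to a quantitative lower bound on $D_{T}(n)$.

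The lower bound on $D_{T}(n)$ is the technical crux and where I expect the main obstacle to lie. The same algebraic identity used above, applied now to the smooth sequence $T(n+j)$, gives $T(n)D_{T}(n)=C_{0}(n)^{2}-C_{-1}(n)C_{1}(n)$ with $C_{j}(n):=T(n+j)^{2}-T(n+j-1)T(n+j+1)$. Writing $\sqrt{n+j}=\sqrt{n}\bigl(1+\tfrac{j}{2n}-\tfrac{j^{2}}{8n^{2}}+\cdots\bigr)$ and setting $F(x):=\log T(x)=\pi\sqrt{x}+O(\log x)$, a Taylor expansion of $T(n+j)$ yields $C_{j}(n)\sim -F''(n+j)\,T(n+j)^{2}$ with $F''(n)\sim -\pi/(4n^{3/2})$ strictly negative. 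A further expansion then shows that $(\log C_{j}(n))''|_{j=0}\sim -\pi/(2n^{3/2})+3/(2n^{2})$ is strictly negative for $n\ge 4$, so $C_{0}^{2}-C_{-1}C_{1}\sim -C_{0}^{2}(\log C_{j})''|_{j=0}$ is positive and gives $D_{T}(n)\asymp e^{3\pi\sqrt{n}}/n^{15/2}$ with an explicit positive leading constant. The real work is to track all Taylor remainders with effective constants and to isolate this leading positive term from the competing subleading contributions with enough margin that the threshold $N_{0}$ beyond which the asymptotic argument alone proves $D(n)>0$ can be kept small.

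Finally, for the finite range $42\le n\le N_{0}$ not covered by the asymptotic argument, \eqref{Mainthmeqn} is verified by direct computation using the generating function $\sum_{n\ge 0}\overline{p}(n)q^{n}=\prod_{k\ge 1}(1+q^{k})/(1-q^{k})$ to produce exact values of $\overline{p}(n)$. This base-case check is routine once $N_{0}$ has been pinned down, and the overall success of the scheme rests on the sharpness of the estimates in the previous step.
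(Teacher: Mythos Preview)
Your outline is sound and the asymptotic checks are correct: the identity $\overline{p}(n)\,D(n)=c_0(n)^2-c_{-1}(n)\,c_1(n)$ holds, the mixed terms carrying at least one $R$-factor are of order $e^{(5/2)\pi\sqrt{n}}/n^{7/2}$, and the iterated log-concavity heuristic does give $D_T(n)\asymp e^{3\pi\sqrt{n}}/n^{15/2}$ with a positive leading constant, so the remainder is indeed exponentially dominated. But what you have written is a plan, not a proof: you explicitly defer ``the real work'' of making every Taylor remainder effective, and since the determinant cancels to relative order $n^{-9/2}$, that bookkeeping through two nested layers of log-concavity is precisely where the difficulty concentrates.

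The paper closes this gap by a genuinely different device. Dividing $D(n)$ by $\overline{p}(n)^3$ yields the exact identity $D(n)/\overline{p}(n)^3=s(n)\,u_n^{2}-2u_n+1$ with $u_n=\overline{p}(n-1)\overline{p}(n+1)/\overline{p}(n)^2$ and $s(n)=u_{n-1}+u_{n+1}-u_{n-1}u_{n+1}$, and then the quadratic structure is exploited: it suffices to show that $s(n)<1$ and that $u_n$ lies below the smaller root $\varphi(s(n))=(1-\sqrt{1-s(n)})/s(n)$. For this the paper does not use the exponentially small error $R(n)$ at all; it uses only the polynomial-type bound $|\widetilde{T}(n)|<\mu(n)^{-6}$ to obtain explicit two-sided rational bounds $f(n)<u_n<g(n)$ and $s_1(n)<s(n)<s_2(n)$, and the targets $s_2(n)<1$ and $g(n)<\varphi(s_1(n))$ reduce, after substituting truncated expansions of $\mu(n\pm 1),\mu(n\pm 2)$ in $y=\pi\sqrt{n}$, to sign checks on explicit polynomials of large but finite degree. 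What this buys is that the deep cancellation is absorbed into exact algebraic identities and the only analytic input is the crude estimate $10\,e^{-\mu/2}<\mu^{-6}$; your approach keeps the analytic estimates in play throughout and must propagate them through two nested $1-e^{\Delta^2(\cdot)}$ expansions, which is feasible in principle but is exactly the part you have not carried out.
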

We organize this paper in the following format. In Section \ref{setup}, we set up the premise first by introducing an inequality for $\overline{p}(n)$ and reformulate Theorem \ref{Mainthm}, given in Theorem \ref{Mainthmreform}, followed by the documentation of Theorem \ref{thm2} and \ref{thm3}. This foundation enables us to provide the proof of Theorem \ref{Mainthmreform} in Section \ref{mainthm}. In the end, we discuss how one can guess an infinite number of inequalities for the overpartition function by considering totally positive matrix of order $k\times k$ with $k \in \mathbb{Z}_{\geq 2}$, described in Problem \ref{prob}. 
\section{Inequality for $\overline{p}(n)$ and its consequences}\label{setup}
The principal aim of this section is to construct the machinery in order to prove Theorem \ref{Mainthm}, the primary objective of this paper. To some extent, we follow a similar line of argument as described in the work of Jia and Wang \cite{JiaWang}. We will see that the Theorem \ref{thm2} and \ref{thm3} are the key tools to prove Theorem \ref{Mainthmreform}, a reprise version of Theorem \ref{Mainthm}. First we need to estimate the quotient $u_n=\frac{\overline{p}(n-1)\overline{p}(n+1)}{\overline{p}(n)^2}$ by showing its upper and lower bound $g(n)$ and $f(n)$ respectively (cf. Theorem \ref{thm1}), derived from the inequality $B_1(n)<\overline{p}(n)<B_2(n)$ with $B_1(n), B_2(n)$ given in \eqref{eqn1} and as an immediate consequence, we get the inequality \eqref{thm1coreqn} for $s(n)=u_{n-1}+u_{n+1}-u_{n-1}u_{n+1}$ as follows; for all $n \geq 3$,
\begin{equation*}
s_1(n)<s(n)<s_2(n),
\end{equation*}
 where $s_1(n)$ and $s_2(n)$ are combinations of $f(n+1), f(n-1), g(n+1)$, and $g(n-1)$. Therefore, to prove Theorem \ref{thm2} and \ref{thm3}, the principal idea behind it is to approximate $s_2(n)$ (cf. \eqref{eqn8}) and $s_1(n)g(n)^2-2g(n)+1$ (cf. \eqref{thm3eqn2}) by rational functions in $y\ (=y(n)=\pi\sqrt{n})$ (cf. \eqref{thm2eqn22} and \eqref{thm3eqn10}). In order to arrive at such estimation to ease the computation, it is necessary to bound the error term $\widetilde{T}(n)$ by $\mu(n)^{-2m}$ for some $m \in \mathbb{Z}_{\geq 1}$ because our estimation turns out to get a suitable polynomial approximation of $r,\ x,\ z$ and $w$ (cf. \eqref{thm2eqn1}) in terms of $y$.  In our case, it is sufficient to consider $m=3$, as stated in Lemma \ref{lemma1}.\\
We denote $\mu(n)= \pi\sqrt{n}$ and define 
\begin{eqnarray}\label{eqn1}
B_{1}(n) &=&\frac{e^{\mu(n)}}{8n}\Bigl(1-\frac{1}{\mu(n)}-\frac{1}{\mu(n)^6}\Bigr)\nonumber\\
\text{and}\ \ B_{2}(n) &=&\frac{e^{\mu(n)}}{8n}\Bigl(1-\frac{1}{\mu(n)}+\frac{1}{\mu(n)^6}\Bigr).
\end{eqnarray}
\begin{lemma}\label{lemma1}
	For all $n \geq 94$, we have
\begin{equation}\label{lemma1eqn}
B_{1}(n) < \overline{p}(n) < B_{2}(n).
\end{equation}
\end{lemma}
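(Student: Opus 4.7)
The plan is to feed Engel's refinement \eqref{Engel1}--\eqref{Engel2} of Zuckerman's series with a fixed truncation $N$, isolate the $k=1$ summand as the main contribution, and show that every other term is dominated by $\frac{e^{\mu(n)}}{8n\,\mu(n)^{6}}$, i.e.\ by half the gap $B_{2}(n)-B_{1}(n)$. For the main term we extract the $k=1$ summand (the only choice of $h$ is $h=0$, and $\omega(0,1)=1$ by the empty-sum convention). A direct differentiation of $\sinh\mu(n)/\sqrt{n}$ together with the identity $\sqrt{n}=\mu(n)/\pi$ gives
\begin{equation*}
\frac{1}{2\pi}\,\frac{d}{dn}\!\left(\frac{\sinh\mu(n)}{\sqrt{n}}\right)=\frac{e^{\mu(n)}}{8n}\!\left(1-\frac{1}{\mu(n)}\right)+\frac{e^{-\mu(n)}}{8n}\!\left(1+\frac{1}{\mu(n)}\right),
\end{equation*}
so the $k=1$ contribution matches exactly the common centre of $B_{1}(n)$ and $B_{2}(n)$ up to an exponentially small tail that will be absorbed into the error budget.

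For odd $3\leq k\leq N$ there are at most $k-1$ values of $h$; since $|\omega(h,k)^{2}/\omega(2h,k)|=1$, each corresponding summand is bounded in absolute value by
\begin{equation*}
\frac{\sqrt{k}}{2\pi}\!\left(\frac{\pi\cosh(\mu(n)/k)}{2kn}+\frac{\sinh(\mu(n)/k)}{2n^{3/2}}\right)\leq \frac{C_{k}\,e^{\mu(n)/k}}{n}
\end{equation*}
for an explicit constant $C_{k}$. Summing over $h$ and over odd $k\in\{3,\dots,N\}$ yields a total subsidiary contribution of order $e^{\mu(n)/3}/n$. The remainder is controlled by \eqref{Engel2}: $|R_{2}(n,N)|\leq \tfrac{N^{5/2}}{2\pi n^{3/2}}e^{\mu(n)/N}$. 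Combining,
\begin{equation*}
\left|\overline{p}(n)-\frac{e^{\mu(n)}}{8n}\!\left(1-\frac{1}{\mu(n)}\right)\right|\leq\frac{e^{-\mu(n)}}{8n}\!\left(1+\frac{1}{\mu(n)}\right)+\frac{C_{N}\,e^{\mu(n)/3}}{n}+\frac{N^{5/2}\,e^{\mu(n)/N}}{2\pi\,n^{3/2}}.
\end{equation*}

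Each summand on the right, divided by the target $\frac{e^{\mu(n)}}{8n\,\mu(n)^{6}}$, carries a factor of the form $\mu(n)^{6}\,e^{-(1-1/k)\mu(n)}$ with $k\in\{\infty,3,N\}$ respectively, which tends to $0$ and is monotone decreasing once $n$ is past a small threshold. Thus I would fix a small odd $N$, verify $B_{1}(n)<\overline p(n)<B_{2}(n)$ by direct numerical substitution at $n=94$, and propagate the inequality to all $n\geq 94$ via the monotone decay. The genuine obstacle is the balance in the last display: the polynomial loss $N^{5/2}$ from \eqref{Engel2} must be outweighed by the exponential gain $e^{-(1-1/N)\mu(n)}$, and $N$ must be chosen just large enough that every error summand fits inside the very small target $\frac{e^{\mu(n)}}{8n\,\mu(n)^{6}}$ already at the boundary $n=94$. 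The numerical budgeting at that threshold, rather than the asymptotic estimate itself, is the delicate step.
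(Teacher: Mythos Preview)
Your analytic strategy is essentially the one in the paper: truncate Engel's series at a small $N$, extract the $k=1$ term as $\tfrac{e^{\mu(n)}}{8n}\bigl(1-\tfrac{1}{\mu(n)}\bigr)$ plus an exponentially small tail, and bound everything else by $\tfrac{e^{\mu(n)}}{8n\,\mu(n)^{6}}$. The paper shortcuts this by quoting the Liu--Zhang bound $|\widetilde{T}(n)|<10\,e^{-\mu(n)/2}$ (their eqn.~(3.6)), which is obtained exactly this way with $N=2$, and then comparing $10\,e^{-\mu(n)/2}$ against $\mu(n)^{-6}$.

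The genuine gap is in your final paragraph. You propose to check $B_{1}(n)<\overline{p}(n)<B_{2}(n)$ at the single point $n=94$ and ``propagate the inequality to all $n\geq 94$ via the monotone decay.'' But the monotone decay is a property of the \emph{error-bound-to-target ratio}, not of the actual inequality; what you can propagate is only the statement that your upper bound on the error stays below the target once it has dropped below it. That statement is false at $n=94$ for every choice of $N$. With $N=2$ the remainder ratio is $\tfrac{16\sqrt{2}\,\mu(94)^{6}}{\pi\sqrt{94}}\,e^{-\mu(94)/2}\approx 140$; taking $N$ larger does not save you, because already the explicit $k=3$ summand contributes a ratio of about $\tfrac{4\sqrt{3}}{3}\,\mu(94)^{6}\,e^{-2\mu(94)/3}\approx 2.8$. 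The crossover $10\,e^{-\mu(n)/2}<\mu(n)^{-6}$ first occurs at $n=275$, not $n=94$. The paper therefore establishes the analytic bound only for $n\geq 275$ and then verifies $B_{1}(n)<\overline{p}(n)<B_{2}(n)$ numerically for \emph{each} $n$ with $94\leq n\leq 274$. You correctly flagged the budgeting at the threshold as the delicate step; the point is that it actually fails there, and a finite numerical check over a range, not a single endpoint evaluation, is unavoidable.
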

\begin{proof}
From \cite[eqn. (3.5)]{LiuZhang}, it follows that
\begin{equation}\label{lemma1eqn1}
\overline{p}(n) = \frac{e^{\mu(n)}}{8n} \Bigl(1-\frac{1}{\mu(n)}+\widetilde{T}(n)\Bigr)
\end{equation}
where 
\begin{equation*}
\widetilde{T}(n)= \Bigl(1+\dfrac{1}{\mu(n)}\Bigr)e^{-2\mu(n)}+\dfrac{8n}{e^{\mu(n)}}R_2(n,2),
\end{equation*}
 and $R_2(n,2)$ is the error term of \eqref{Zuckerman}, given in \eqref{Engel2}. By \cite[eqn. (3.6)]{LiuZhang}, we have
\begin{equation}\label{lemma1eqn2}
|\widetilde{T}(n)|<10\ e^{-\frac{1}{2}\mu(n)}.
\end{equation}
Now,
\begin{equation}\label{lemma1eqn3}
10\ e^{-\frac{1}{2}\mu(n)} < \frac{1}{\mu(n)^6}\ \ \  \text{for all}\ n \geq 275.
\end{equation}
\eqref{lemma1eqn1}-\eqref{lemma1eqn3} altogether imply \eqref{lemma1eqn} for all $n \geq 275$. We finish the proof by confirming \eqref{lemma1eqn} by checking numerically in Mathematica for all $94 \leq n \leq 274$.
\end{proof}
\begin{remark}\label{remark}
We note that the upper bound of the absolute value of error term $\widetilde{T}(n)$ can be improved by considering a more generalized version of \eqref{lemma1eqn3} of the following form: there exists $N(m) \in \mathbb{Z}_{\geq 1}$, such that for all $n \geq N(m)$,
\begin{equation}\label{remarkeqn}
10\ e^{-\frac{1}{2}\mu(n)} < \frac{1}{\mu(n)^m}.  
\end{equation}
\end{remark}
Let
\begin{equation}\label{eqn2}
u_{n}= \frac{\overline{p}(n-1)\overline{p}(n+1)}{\overline{p}(n)^2}
\end{equation}
and consequently, denote 
\begin{equation}\label{eqn3}
s(n) = u_{n-1}+u_{n+1}-u_{n-1}u_{n+1}.
\end{equation} 
Following the notations as given in \cite{JiaWang}, we set 
\begin{equation}\label{eqn4}
r =\mu(n-2),\ \ x=\mu(n-1),\ \ y=\mu(n),\ \ z=\mu(n+1),\ \ w=\mu(n+2),
\end{equation}
and
\begin{eqnarray}\label{eqn5}
f(n) &=&  e^{x-2y+z}\frac{(x^6-x^5-1)y^{16}(z^6-z^5-1)}{x^8 (y^6-y^5+1)^2 z^8},
\end{eqnarray}
\begin{eqnarray}\label{eqn6}
g(n) &=&  e^{x-2y+z}\frac{(x^6-x^5+1)y^{16}(z^6-z^5+1)}{x^8 (y^6-y^5-1)^2 z^8}.
\end{eqnarray}
As an immediate consequence of Lemma \ref{lemma1}, we have the following theorem. 
\begin{theorem}\label{thm1}
For all $n \geq 94$,
\begin{equation}\label{thm1eqn}
	f(n) < u_{n} < g(n).
\end{equation}	
\end{theorem}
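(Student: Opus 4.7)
The proof is essentially a direct computation, so the plan is to substitute the two-sided estimate of Lemma \ref{lemma1} into the definition \eqref{eqn2} of $u_n$ and simplify. Since $B_1(m)>0$ for every $m$ in the relevant range (this is immediate from \eqref{eqn1} once $\mu(m)$ is large enough, which holds well before $m=93$), we may apply Lemma \ref{lemma1} to each of $\overline{p}(n-1)$, $\overline{p}(n)$, $\overline{p}(n+1)$ and combine the resulting inequalities to obtain
\begin{equation*}
\frac{B_1(n-1)\,B_1(n+1)}{B_2(n)^2} \;<\; u_n \;<\; \frac{B_2(n-1)\,B_2(n+1)}{B_1(n)^2}.
\end{equation*}

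The next step is the algebraic identification of these two outer bounds with $f(n)$ and $g(n)$. Using \eqref{eqn1} and writing each factor $1-\mu(\cdot)^{-1}\pm\mu(\cdot)^{-6}$ over a common denominator, one has
\begin{equation*}
B_1(n-1) \;=\; \frac{e^{x}}{8(n-1)}\cdot\frac{x^6-x^5-1}{x^6},
\qquad
B_2(n) \;=\; \frac{e^{y}}{8n}\cdot\frac{y^6-y^5+1}{y^6},
\end{equation*}
and similarly for the other three factors. Multiplying and dividing yields
\begin{equation*}
\frac{B_1(n-1)\,B_1(n+1)}{B_2(n)^2}
\;=\;
e^{x-2y+z}\cdot\frac{n^2}{(n-1)(n+1)}\cdot
\frac{(x^6-x^5-1)(z^6-z^5-1)\,y^{12}}{x^6 z^6 (y^6-y^5+1)^2}.
\end{equation*}
The key simplification is the relation $\pi^2 n = y^2$, $\pi^2(n-1)=x^2$, $\pi^2(n+1)=z^2$ from \eqref{eqn4}, which gives $n^2/\bigl((n-1)(n+1)\bigr)=y^4/(x^2 z^2)$. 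Inserting this collapses the prefactor into the $y^{16}$ and $x^8 z^8$ appearing in \eqref{eqn5}, so the lower bound is exactly $f(n)$. The identical computation applied to $B_2(n-1)B_2(n+1)/B_1(n)^2$ recovers $g(n)$.

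The plan therefore consists of three routine steps: (i) apply Lemma \ref{lemma1} entrywise; (ii) rewrite each $B_i(\cdot)$ in the ``polynomial over $x^6, y^6, z^6$'' form used above; (iii) convert the index ratio $n^2/((n-1)(n+1))$ into the $y/x/z$ variables. There is no analytic difficulty; the only subtlety is a bookkeeping one. Lemma \ref{lemma1} is stated for $n\geq 94$, so for the range $n\geq 95$ the three-fold application is legitimate, while the boundary case $n=94$ (which requires a bound on $\overline{p}(93)$) must be verified separately by a direct numerical check that $f(94)<u_{94}<g(94)$ in Mathematica. This small boundary verification is the only place where anything beyond symbol pushing occurs, and it is the only step I would expect a careful referee to flag.
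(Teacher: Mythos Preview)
Your argument is correct and is exactly what the paper has in mind: the paper states Theorem \ref{thm1} as ``an immediate consequence of Lemma \ref{lemma1}'' without writing out the algebra, and your steps (i)--(iii) supply precisely that omitted computation, including the identification $n^2/((n-1)(n+1))=y^4/(x^2z^2)$ that turns the bound into the closed forms \eqref{eqn5}--\eqref{eqn6}. Your flag on the boundary case $n=94$ (requiring the bound on $\overline{p}(93)$) is a legitimate point the paper glosses over; handling it by a single numerical check is appropriate.
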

We begin with the following setup.
Define 
\begin{equation}\label{eqn7}
s_1(n)=f(n-1)+f(n+1)-g(n-1)g(n+1),
\end{equation}
and
\begin{equation}\label{eqn8}
s_2(n)=g(n-1)+g(n+1)-f(n-1)f(n+1).
\end{equation}
As a corollary of Theorem \ref{thm1}, we arrive at the following inequality for $s(n)$.
\begin{corollary}\label{thm1cor}
	For $n \geq 91$, we have
	\begin{equation}\label{thm1coreqn}
	s_1(n)<s(n)<s_2(n).
	\end{equation}	
\end{corollary}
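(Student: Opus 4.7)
The plan is to derive the inequality \eqref{thm1coreqn} directly from the enclosure $f(n)<u_n<g(n)$ of Theorem \ref{thm1}, applied at the two indices $n-1$ and $n+1$, supplemented by a short numerical check near the boundary. The starting identity is
\[
s(n)=u_{n-1}+u_{n+1}-u_{n-1}u_{n+1},
\]
and the point is that the three summands on the right can each be bounded independently by the $f$- and $g$-enclosures from Theorem \ref{thm1}, as long as the enclosing functions are positive so that the direction of each inequality is preserved.

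Concretely, for the upper bound $s(n)<s_2(n)$ I would substitute $u_{n\pm 1}<g(n\pm 1)$ into the two linear terms and use $u_{n\pm 1}>f(n\pm 1)$ (applied with a sign flip) in the negative quadratic term. Positivity of $f(n-1)$ and $f(n+1)$ in the relevant range is immediate from the defining formula \eqref{eqn5}, so multiplying the two $f$-bounds preserves the inequality. Adding the three contributions yields
\[
s(n)<g(n-1)+g(n+1)-f(n-1)f(n+1)=s_2(n).
\]
The same bookkeeping with the roles of $f$ and $g$ interchanged gives
\[
s(n)>f(n-1)+f(n+1)-g(n-1)g(n+1)=s_1(n).
\]

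This reasoning is valid whenever Theorem \ref{thm1} can be applied at both $n-1$ and $n+1$, i.e.\ whenever $n-1\geq 94$, or equivalently $n\geq 95$. To close the small gap down to $n\geq 91$ claimed in the corollary, I would evaluate $u_{n-1}$, $u_{n+1}$, and the four numbers $f(n\pm 1)$, $g(n\pm 1)$ numerically in Mathematica, as the paper already does for Lemma \ref{lemma1}, and verify both inequalities directly for the four remaining values $n\in\{91,92,93,94\}$.

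I do not foresee any serious obstacle: the analytic content is essentially the monotonicity of the polynomial $(a,b)\mapsto a+b-ab$ on the positive quadrant with fixed second (respectively first) coordinate, and the heavy lifting has already been done in Theorem \ref{thm1}. The only step that requires any care is making sure that on the $-u_{n-1}u_{n+1}$ term one uses the $f$-lower bounds when upper-bounding $s(n)$, and the $g$-upper bounds when lower-bounding $s(n)$, which is exactly what the definitions of $s_1(n)$ and $s_2(n)$ anticipate.
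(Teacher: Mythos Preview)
Your proposal is correct and matches the paper's intended argument: the paper does not spell out a proof of Corollary~\ref{thm1cor} beyond declaring it an immediate consequence of Theorem~\ref{thm1}, and term-by-term bounding of $u_{n-1}+u_{n+1}-u_{n-1}u_{n+1}$ using $f(n\pm 1)<u_{n\pm 1}<g(n\pm 1)$ (with the positivity of $f$ justifying the product step) is precisely the natural reading of that declaration. Your handling of the threshold gap $91\le n\le 94$ by a short numerical check is also in line with the paper's style; one small caveat is that your closing remark about monotonicity of $(a,b)\mapsto a+b-ab$ on the full positive quadrant is not literally true (the sign of $\partial/\partial a$ depends on whether $b<1$), but your actual argument does not use this and instead bounds the three summands separately, so the proof stands.
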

Now we interpret the Theorem \ref{Mainthm} in terms of a polynomial expression in $s(n)$ and $u_n$ (cf. \eqref{eqn2} and \eqref{eqn3}), given as follows
\begin{theorem}\label{Mainthmreform}
	For all $n \geq 42$, we have
\begin{equation}\label{Mainthmreformeqn}
s(n)u^2_{n}-2u_n+1>0.
\end{equation}	
\end{theorem}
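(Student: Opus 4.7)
The plan is to reduce the target inequality to a cleaner single inequality that can be verified using Theorems \ref{thm2} and \ref{thm3}, with a finite initial range handled by direct computation. Introduce the two-variable function $\Phi(s,u) := su^2 - 2u + 1$ so that the claim reads $\Phi(s(n),u_n) > 0$. The key analytic feature of $\Phi$ is its monotonicity: for $u > 0$ one has $\partial_s \Phi = u^2 > 0$, so $\Phi$ is strictly increasing in $s$; and for $s > 0$ one has $\partial_u \Phi = 2(su-1)$, so $\Phi(s,\cdot)$ is strictly decreasing on the interval $(0,1/s)$.

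First I would combine these monotonicities with the one-sided estimates already in place. Theorem \ref{thm1} provides $u_n < g(n)$ for $n \geq 94$, and Corollary \ref{thm1cor} provides $s(n) > s_1(n)$ for $n \geq 91$. If one can certify that $s_1(n)$ and $g(n)$ are positive and satisfy $s_1(n) g(n) < 1$, i.e.\ $g(n) < 1/s_1(n)$, then the pair $(s_1(n),g(n))$ lies in the decreasing branch of $\Phi(s_1(n),\cdot)$, and chaining the two monotonicities yields
\begin{equation*}
s(n) u_n^2 - 2u_n + 1 \;=\; \Phi(s(n), u_n) \;\geq\; \Phi(s_1(n), u_n) \;\geq\; \Phi(s_1(n), g(n)) \;=\; s_1(n)g(n)^2 - 2g(n) + 1.
\end{equation*}
The hypothesis $s_1(n)g(n) < 1$ is precisely the place where Theorem \ref{thm2} enters: its rational approximation to $s_2(n)$ (hence to $s_1(n)$, via $s_1(n) < s_2(n)$) together with the explicit form of $g(n)$ in \eqref{eqn6} certifies that both $s_1(n)$ and $g(n)$ stay in $(0,1)$ uniformly in the asymptotic range, so the monotonicity step is valid.

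Once this reduction is carried out, the main inequality for $n \geq 94$ follows immediately from Theorem \ref{thm3}, which furnishes $s_1(n)g(n)^2 - 2g(n) + 1 > 0$ in exactly this range. The residual range $42 \leq n \leq 93$, where Lemma \ref{lemma1} is not yet in force, is handled by a direct numerical verification: compute $\overline{p}(0),\dots,\overline{p}(95)$ from the generating function $\sum_{n\geq 0}\overline{p}(n)q^n = \prod_{m\geq 1}(1+q^m)/(1-q^m)$ and evaluate the polynomial expression $s(n)u_n^2 - 2u_n + 1$ (equivalently, the $3\times 3$ determinant) at each of these values in Mathematica.

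The main obstacle is the severe cancellation embedded in the reduction. Since $s_1(n), g(n) \to 1$ as $n \to \infty$, the quantity $s_1(n)g(n)^2 - 2g(n) + 1$ has the indeterminate leading behaviour $1 - 2 + 1 = 0$, and its sign is controlled entirely by subleading corrections in the Hardy--Ramanujan--Rademacher asymptotic. This is exactly why Lemma \ref{lemma1} sharpened the error term $\widetilde{T}(n)$ to $O(\mu(n)^{-6})$ (the case $m=3$ of Remark \ref{remark}): only at this level of precision can one expand $e^{x - 2y + z}$ and the rational factors of $f,g$ in the variables $r,x,z,w$ accurately enough (via \eqref{eqn4}) to produce, after substitution, a rational approximation in $y = \pi\sqrt{n}$ whose leading nonzero coefficient unambiguously reveals positivity. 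The final step inside Theorem \ref{thm3} is then a polynomial-positivity check in the single variable $y$ above an explicit threshold, which is routine.
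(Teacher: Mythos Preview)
Your proposal is correct and follows essentially the paper's own argument: the paper likewise treats $Q(t)=s(n)t^{2}-2t+1$ as a quadratic in $t$, uses Corollary~\ref{thm1cor} and Theorem~\ref{thm2} to get $s(n)<1$ so that $Q$ has real roots $t_0=\varphi(s(n))<t_1$, and then concludes $Q(u_n)>0$ from $u_n<g(n)<\varphi(s(n))$ via Theorems~\ref{thm1} and~\ref{thm3}, with $42\le n\le 90$ checked numerically. The only cosmetic difference is that the paper works with $s(n)$ and the \emph{statement} of Theorem~\ref{thm3} directly, while you first pass to $s_1(n)$ by monotonicity and then quote the intermediate inequality \eqref{thm3eqn2} from its proof; the paper's route thereby avoids your auxiliary hypothesis $s_1(n)g(n)<1$, since $u_n<\varphi(s(n))$ already places $u_n$ below the smaller root.
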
 
To prove \eqref{Mainthmreformeqn}, first it is required to estimate upper bound of $s(n)$, given by studying $s_{2}(n)$, as follows;
\begin{theorem}\label{thm2}
For all $n \geq 3$, we have
\begin{equation}\label{thm2eqn}
s_2(n)<1.
\end{equation}	
\end{theorem}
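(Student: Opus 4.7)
The plan is to reduce the transcendental inequality $s_2(n) < 1$ to a polynomial inequality in the single variable $y = \pi\sqrt{n}$ valid for all sufficiently large $n$, and to dispatch the remaining small cases by direct numerical verification.

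First I would express $r, x, z, w$ in terms of $y$ via $r^2 = y^2 - 2\pi^2$, $x^2 = y^2 - \pi^2$, $z^2 = y^2 + \pi^2$, $w^2 = y^2 + 2\pi^2$. Since the binomial series
\[
\sqrt{1+t} = 1 + \tfrac{t}{2} - \tfrac{t^2}{8} + \tfrac{t^3}{16} - \cdots
\]
is alternating past the linear term, truncating it yields explicit two-sided polynomial bounds for each of $r, x, z, w$ as functions of $1/y$, with rigorous remainder control. Next I would treat the exponential factors $e^{r-2x+y}$ appearing in $g(n-1), f(n-1)$ and $e^{y-2z+w}$ appearing in $g(n+1), f(n+1)$: each exponent is a second difference of $\pi\sqrt{\cdot}$ and is therefore of order $1/y^3$, so a truncated Taylor expansion of $e^t$ with explicit error term furnishes sharp rational upper and lower envelopes. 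Substituting all of these approximations into the definitions \eqref{eqn5} and \eqref{eqn6} of $f$ and $g$ produces an explicit rational function $\tilde s_2(y)$ with $s_2(n) \leq \tilde s_2(y(n))$ for $n \geq N_0$, for some effective threshold $N_0$.

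Verifying $\tilde s_2(y) < 1$ for $y \geq \pi\sqrt{N_0}$ then reduces, after clearing denominators, to a single polynomial inequality $P(y) > 0$. I would confirm this by checking that the leading coefficient of $P$ has the correct sign and that $P$ has no real zero in $[\pi\sqrt{N_0}, \infty)$, via Sturm's algorithm or explicit root isolation in a computer algebra system. The residual range $3 \leq n < N_0$ is handled by direct numerical evaluation, which is cheap since $f$ and $g$ are elementary functions.

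The main obstacle will be the near-cancellation in $1 - s_2(n)$: since both $g(n\pm 1)$ and $f(n\pm 1)$ tend to $1$ as $n \to \infty$, many leading terms in the expansion of $g(n-1) + g(n+1) - f(n-1)f(n+1) - 1$ cancel, and the series in $1/y$ must be carried to a sufficiently high order (cf. Remark \ref{remark}) to expose the sign of the first non-vanishing contribution. Propagating explicit remainder bounds through this compound expression while keeping the numerator degree of $\tilde s_2(y)$ small enough for effective polynomial checking is the technical heart of the argument; once this bookkeeping is done, the final inequality in $y$ is routine to verify.
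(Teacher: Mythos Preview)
Your proposal is correct and follows essentially the same approach as the paper: bound $r,x,z,w$ by truncated binomial expansions in $1/y$, bound the exponential factors by truncated Taylor polynomials (the paper uses degree $6$ and $7$ cutoffs $\Phi$ and $\phi$), substitute into $f$ and $g$ to obtain rational envelopes $R_i(y)$, and reduce $s_2(n)<1$ to a single polynomial sign check in $y$ (the paper's $N_1(y)D_1(y)<0$), with the range $3\le n\le 59$ done numerically. Your identification of the near-cancellation as the reason for carrying the expansions to high order is exactly the point; the paper goes out to $y^{-13}$ in the square-root bounds for this reason.
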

\begin{proof}
For $n \geq 3$, rewriting \eqref{eqn4}, we have
\begin{equation}\label{thm2eqn1}
r=\sqrt{y^2-2\pi^2},\ \ x=\sqrt{y^2-\pi^2},\ \ z=\sqrt{y^2+\pi^2},\ \ w=\sqrt{y^2+2\pi^2}.
\end{equation}
Expanding $r,\ x,\ z\ \text{and}\ w$ in terms of $y$, it follows that
\begin{eqnarray}
r &=& y-\frac{\pi^2}{y}-\frac{\pi^4}{y^3}-\frac{\pi^6}{2y^5}-\frac{5\pi^8}{8y^7}-\frac{7\pi^{10}}{8y^9}-\frac{21\pi^{12}}{16y^{11}}-\frac{33\pi^{14}}{16y^{13}}+O \Bigl(\frac{1}{y^{15}}\Bigr),\nonumber\\
x &=& y-\frac{\pi^2}{2y}-\frac{\pi^4}{8y^3}-\frac{\pi^6}{16y^5}-\frac{5\pi^8}{128y^7}-\frac{7\pi^{10}}{256y^9}-\frac{21\pi^{12}}{1024y^{11}}-\frac{33\pi^{14}}{2048y^{13}}+O\Bigl(\frac{1}{y^{15}}\Bigr),\nonumber\\
z &=& y+\frac{\pi^2}{2y}-\frac{\pi^4}{8y^3}+\frac{\pi^6}{16y^5}-\frac{5\pi^8}{128y^7}+\frac{7\pi^{10}}{256y^9}-\frac{21\pi^{12}}{1024y^{11}}+\frac{33\pi^{14}}{2048y^{13}}+O\Bigl(\frac{1}{y^{15}}\Bigr),\nonumber\\
w &=& y+\frac{\pi^2}{y}-\frac{\pi^4}{y^3}+\frac{\pi^6}{2y^5}-\frac{5\pi^8}{8y^7}+\frac{7\pi^{10}}{8y^9}-\frac{21\pi^{12}}{16y^{11}}+\frac{33\pi^{14}}{16y^{13}}+O\Bigl(\frac{1}{y^{15}}\Bigr)\nonumber.
\end{eqnarray}
It can be easily verified that for all $n \geq 59$,
\begin{equation}\label{thm2eqn2}
r_1<r<r_2,
\end{equation}
\begin{equation}\label{thm2eqn3}
x_1<x<x_2,
\end{equation}
\begin{equation}\label{thm2eqn4}
z_1<z<z_2,
\end{equation}
and
\begin{equation}\label{thm2eqn5}
w_1<w<w_2,
\end{equation}
with
\begin{eqnarray}
r_1&=& y-\frac{\pi^2}{y}-\frac{\pi^4}{y^3}-\frac{\pi^6}{2y^5}-\frac{5\pi^8}{8y^7}-\frac{7\pi^{10}}{8y^9}-\frac{21\pi^{12}}{16y^{11}}-\frac{34\pi^{14}}{16y^{13}},\nonumber\\
r_2 &=& y-\frac{\pi^2}{y}-\frac{\pi^4}{y^3}-\frac{\pi^6}{2y^5}-\frac{5\pi^8}{8y^7}-\frac{7\pi^{10}}{8y^9}-\frac{21\pi^{12}}{16y^{11}}-\frac{33\pi^{14}}{16y^{13}},\nonumber\\
x_1 &=& y-\frac{\pi^2}{2y}-\frac{\pi^4}{8y^3}-\frac{\pi^6}{16y^5}-\frac{5\pi^8}{128y^7}-\frac{7\pi^{10}}{256y^9}-\frac{21\pi^{12}}{1024y^{11}}-\frac{34\pi^{14}}{2048y^{13}},\nonumber\\
x_2 &=& y-\frac{\pi^2}{2y}-\frac{\pi^4}{8y^3}-\frac{\pi^6}{16y^5}-\frac{5\pi^8}{128y^7}-\frac{7\pi^{10}}{256y^9}-\frac{21\pi^{12}}{1024y^{11}}-\frac{33\pi^{14}}{2048y^{13}},\nonumber\\
z_1&=& y+\frac{\pi^2}{2y}-\frac{\pi^4}{8y^3}+\frac{\pi^6}{16y^5}-\frac{5\pi^8}{128y^7}+\frac{7\pi^{10}}{256y^9}-\frac{21\pi^{12}}{1024y^{11}},\nonumber\\
z_2 &=& y+\frac{\pi^2}{2y}-\frac{\pi^4}{8y^3}+\frac{\pi^6}{16y^5}-\frac{5\pi^8}{128y^7}+\frac{7\pi^{10}}{256y^9}-\frac{21\pi^{12}}{1024y^{11}}+\frac{33\pi^{14}}{2048y^{13}},\nonumber\\
w_1&=& y+\frac{\pi^2}{y}-\frac{\pi^4}{y^3}+\frac{\pi^6}{2y^5}-\frac{5\pi^8}{8y^7}+\frac{7\pi^{10}}{8y^9}-\frac{21\pi^{12}}{16y^{11}},\nonumber\\
w_2&=& y+\frac{\pi^2}{y}-\frac{\pi^4}{y^3}+\frac{\pi^6}{2y^5}-\frac{5\pi^8}{8y^7}+\frac{7\pi^{10}}{8y^9}-\frac{21\pi^{12}}{16y^{11}}+\frac{33\pi^{14}}{16y^{13}}.\nonumber
\end{eqnarray}
Following the definition of $s_2(n)$ given in \eqref{eqn8}, we see that it suffices to estimate $f(n-1)$, $f(n+1)$, $g(n-1)$ and $g(n+1)$. Now, we observe that each of these four functions consists of two factors, the exponential factor and the rational function in variables $x, y$ and $z$ (cf. \eqref{eqn5} and \eqref{eqn6}). This suggests that it is enough to estimate $e^{r-2x+y}$, $e^{y-2z+w}$, $h(n-1)$, $h(n+1)$, $q(n-1)$ and $q(n+1)$ individually, where
\begin{equation}\label{thm2eqn6}
f(n) = e^{x-2y+z}h(n), \ \ g(n) = e^{x-2y+z}q(n),
\end{equation}
with
\begin{equation}\label{thm2eqn7}
h(n)=\frac{(x^6-x^5-1)y^{16}(z^6-z^5-1)}{x^8 (y^6-y^5+1)^2 z^8},
\end{equation}
and 
\begin{equation}\label{thm2eqn8}
q(n)=\frac{(x^6-x^5+1)y^{16}(z^6-z^5+1)}{x^8 (y^6-y^5-1)^2 z^8}.
\end{equation}
First, let us consider the exponential factors $e^{r-2x+y}$ and $e^{y-2z+w}$. By \eqref{thm2eqn2}-\eqref{thm2eqn5}, for all $n \geq 59$, it follows that
\begin{equation}\label{thm2eqn9}
e^{r_1-2x_2+y}<e^{r-2x+y}<e^{r_2-2x_1+y},
\end{equation}
\begin{equation}\label{thm2eqn10}
e^{y-2z_2+w_1}<e^{y-2z+w}<e^{y-2z_1+w_2}.
\end{equation}
Next, we estimate \eqref{thm2eqn9} and \eqref{thm2eqn10} by Taylor expansion of the exponential function in order to get bounds in terms of rational function in $y$. For convenience, set
\begin{equation}\label{thm2eqn11}
\Phi(t)=\sum_{i=0}^{6}\frac{t^i}{i!},
\end{equation}
and 
\begin{equation}\label{thm2eqn12}
\phi(t)=\sum_{i=0}^{7}\frac{t^i}{i!}.
\end{equation}
For $t \in \mathbb{R}_{<0}$, 
\begin{equation}\label{thm2eqn13}
\phi(t)<e^t<\Phi(t).
\end{equation}
We note that
\begin{equation*}
r_2-2x_1+y=-\frac{\pi^4\Bigl(1039\pi^{10}+651\pi^8y^2+420\pi^6y^4+280\pi^4y^6+192\pi^2y^8+128y^{10}\Bigr)}{512 y^{13}} <0
\end{equation*}
and for all $n \geq 2$,
\begin{equation*}
y-2z_1+w_2=\frac{\pi^4\Bigl(1056\pi^{10}-651\pi^8y^2+420\pi^6y^4-280\pi^4y^6+192\pi^2y^8-128y^{10}\Bigr)}{512 y^{13}} <0.
\end{equation*}
Putting \eqref{thm2eqn13} into \eqref{thm2eqn9} and \eqref{thm2eqn10}, we get for $n \geq 59$,
\begin{equation}\label{thm2eqn14}
\phi(r_1-2x_2+y)<e^{r-2x+y}<\Phi(r_2-2x_1+y),
\end{equation}
and 
\begin{equation}\label{thm2eqn15}
\phi(y-2z_2+w_1)<e^{y-2z+w}<\Phi(y-2z_1+w_2).
\end{equation}
Finally, it remains to estimate $h(n-1)$, $h(n+1)$, $q(n-1)$ and $q(n+1)$. We rewrite these four functions as
\begin{equation*}
\begin{split}
h(n-1) &= \frac{x^{16}\beta(r)\beta(y)}{r^8y^8\alpha(x)^2},\ \ \ h(n+1) = \frac{z^{16}\beta(y)\beta(w)}{w^8y^8\alpha(z)^2},\\
q(n-1) &= \frac{x^{16}\alpha(r)\alpha(y)}{r^8y^8\beta(x)^2},\ \ \ q(n+1) = \frac{z^{16}\alpha(y)\alpha(w)}{w^8y^8\beta(z)^2},
\end{split}
\end{equation*}
where 
\begin{equation}\label{thm2eqn16}
\alpha(t)=t^6-t^5+1\ \ \text{and}\ \ \beta(t)=t^6-t^5-1.
\end{equation}
Using \eqref{thm2eqn2}-\eqref{thm2eqn5}, for $n \geq 59$, we put down a list of inequalities as follows
\begin{eqnarray}\label{host}
r^6-r_2r^4+1&<\alpha(r)&<r^6-r_1r^4+1,\nonumber\\
x^6-x_2x^4+1&<\alpha(x)&<x^6-x_1x^4+1,\nonumber\\
z^6-z_2z^4+1&<\alpha(z)&<z^6-z_1z^4+1,\nonumber\\
w^6-w_2w^4+1&<\alpha(w)&<w^6-w_1w^4+1,\nonumber\\
r^6-r_2r^4-1&<\beta(r)&<r^6-r_1r^4-1,\nonumber\\
x^6-x_2x^4-1&<\beta(x)&<x^6-x_1x^4-1,\nonumber\\
z^6-z_2z^4-1&<\beta(z)&<z^6-z_1z^4-1,\nonumber\\
w^6-w_2w^4-1&<\beta(w)&<w^6-w_1w^4-1,\nonumber\\
x^{12}-2x_2x^{10}+x^{10}+2x^6-2x_2x^4+1&<\alpha(x)^2&<x^{12}-2x_1x^{10}+x^{10}+2x^6-2x_1x^4+1,\nonumber\\
z^{12}-2z_2z^{10}+z^{10}+2z^6-2z_2z^4+1&<\alpha(z)^2&<z^{12}-2z_1z^{10}+z^{10}+2z^6-2z_1z^4+1,\nonumber\\
x^{12}-2x_2x^{10}+x^{10}-2x^6+2x_1x^4+1&<\beta(x)^2&<x^{12}-2x_1x^{10}+x^{10}-2x^6+2x_2x^4+1,\nonumber\\
z^{12}-2z_2z^{10}+z^{10}-2z^6+2z_1z^4+1&<\beta(z)^2&<z^{12}-2z_1z^{10}+z^{10}-2z^6+2z_2z^4+1.\nonumber\\
\end{eqnarray}
By application of the above inequalities, it follows that
\begin{equation}\label{thm2eqn17}
h(n-1)>\frac{(r^6-r_2r^4-1)x^{16}(y^6-y^5-1)}{r^8y^8(x^{12}-2x_1x^{10}+x^{10}+2x^6-2x_1x^4+1)},
\end{equation}
 \begin{equation}\label{thm2eqn18}
 h(n+1)>\frac{(w^6-w_2w^4-1)z^{16}(y^6-y^5-1)}{w^8y^8(z^{12}-2z_1z^{10}+z^{10}+2z^6-2z_1z^4+1)},
 \end{equation}
\begin{equation}\label{thm2eqn19}
q(n-1)<\frac{(r^6-r_1r^4+1)x^{16}(y^6-y^5+1)}{r^8y^8(x^{12}-2x_2x^{10}+x^{10}-2x^6+2x_1x^4+1)},
\end{equation}
\begin{equation}\label{thm2eqn20}
q(n+1)<\frac{(w^6-w_1r^4+1)z^{16}(y^6-y^5+1)}{r^8y^8(z^{12}-2z_2z^{10}+z^{10}-2z^6+2z_1z^4+1)}.
\end{equation}
Invoking \eqref{thm2eqn14}-\eqref{thm2eqn15} and \eqref{thm2eqn17}-\eqref{thm2eqn20} into \eqref{thm2eqn6}, for $n \geq 59$, we have
\begin{eqnarray}
g(n-1)&<&R_1(y)= \frac{(r^6-r_1r^4+1)x^{16}(y^6-y^5+1)\Phi(r_2-2x_1+y)}{r^8y^8(x^{12}-2x_2x^{10}+x^{10}-2x^6+2x_1x^4+1)},\nonumber\\
g(n+1)&<&R_2(y)= \frac{(w^6-w_1r^4+1)z^{16}(y^6-y^5+1)\Phi(y-2z_1+w_2)}{w^8y^8(z^{12}-2z_2z^{10}+z^{10}-2z^6+2z_1z^4+1)},\nonumber\\
f(n-1)&>&R_3(y)=\frac{(r^6-r_2r^4-1)x^{16}(y^6-y^5-1)\phi(r_1-2x_2+y)}{r^8y^8(x^{12}-2x_1x^{10}+x^{10}+2x^6-2x_1x^4+1)},\nonumber\\
f(n+1)&>&R_4(y)= \frac{(w^6-w_2w^4-1)z^{16}(y^6-y^5-1)\phi(y-2z_2+w_1)}{w^8y^8(z^{12}-2z_1z^{10}+z^{10}+2z^6-2z_1z^4+1)}.\nonumber
\end{eqnarray}
By definition of $s_2(n)$ (cf. \eqref{eqn8}), it suffices to prove that
\begin{equation}\label{thm2eqn21}
R_1(y)+R_2(y)-R_3(y)R_4(y)-1<0
\end{equation}
We can reduce $R_1(y)+R_2(y)-R_3(y)R_4(y)-1$ into a rational function in $y$; i.e,
\begin{equation}\label{thm2eqn22}
R_1(y)+R_2(y)-R_3(y)R_4(y)-1 =\frac{N_1(y)}{D_1(y)},
\end{equation}
where $N_1(y)$ and $D_1(y)$ are polynomials in $y$ with respective degree $324$ and $330$. In order to prove \eqref{thm2eqn21}, it is equivalent to prove $N_1(y)D_1(y)<0$. We write
\begin{equation}\label{thm2eqn23}
N_1(y)D_1(y)= \sum_{i=0}^{654}a_i y^i,
\end{equation}
where 
\begin{equation*}
a_{654}= 2^{348}\cdot 3^8\cdot 5^4\cdot 7^4\cdot(2^8-\pi^8) < 0.
\end{equation*}
We observe that if a polynomial, say $P(x)=\sum_{i=0}^{m}a_ix^i\in \mathbb{R}[x]$ of degree $m$ with its leading coefficient $a_m \in \mathbb{R}_{<0}$, then $P(x)$ is a decreasing function in $x$ and consequently, $P(x)<0$ for all $x\geq x_0$ where $x_0 \in \mathbb{R}$. So the only undetermined factor left over is the explicit value of $y_{0}$ such that $N_1(y)D_1(y)<0$ for all $y \geq y_0$, checked by Mathematica that $y_0=6$. We conclude the proof by numerical verification that $s_2(n)<1$ holds for $3\leq n\leq 59$.  
\end{proof}
Next, using the bound of $s_2(n)$ given in Theorem \ref{thm2}, we propose an upper bound for $g(n)$ in terms of a function of $s(n)$ that enables us to get into the proof of Theorem \ref{Mainthmreform}.
\begin{theorem}\label{thm3}
For $t \in (0,1)$, define 
	\begin{equation}\label{thm3eqn0}
	\varphi(t)=\frac{1-\sqrt{1-t}}{t}.
	\end{equation}
Then for all $n \geq 30$, we have
\begin{equation}\label{thm3eqn00}
g(n)<\varphi(s(n)).
\end{equation}
\end{theorem}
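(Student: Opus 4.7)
The plan is to convert the claim into a polynomial inequality and then apply the same rational-approximation strategy used in the proof of Theorem \ref{thm2}. The function $\varphi(s) = (1-\sqrt{1-s})/s$ is the smaller root of $sX^2 - 2X + 1 = 0$, and, since $\varphi(s) < 1 < (1+\sqrt{1-s})/s$ for $s \in (0,1)$, the inequality $g(n) < \varphi(s(n))$ is equivalent to the conjunction
\[
g(n) \leq 1 \qquad \text{and} \qquad s(n) g(n)^2 - 2 g(n) + 1 > 0.
\]
I would first verify $g(n) < 1$ for $n \geq 30$: since $e^{x - 2y + z} < 1$ by strict concavity of $\sqrt{\cdot}$, one inserts the polynomial envelopes \eqref{thm2eqn2}--\eqref{thm2eqn5} for $x, z$ and the rational bounds analogous to \eqref{thm2eqn17}--\eqref{thm2eqn20} into \eqref{eqn6} to bound $g(n)$ above by a rational function of $y = \pi\sqrt{n}$; showing this upper estimate is $<1$ beyond some explicit threshold and checking the remaining small $n$ in Mathematica takes care of this auxiliary condition.

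For the main polynomial inequality, Corollary \ref{thm1cor} gives $s(n) > s_1(n)$, so it suffices to prove the stronger target
\[
s_1(n) g(n)^2 - 2 g(n) + 1 > 0 \qquad (n \geq 30).
\]
The explicit forms \eqref{eqn6}--\eqref{eqn7} make the left side a concrete function of $r, x, y, z, w$, hence of $y$ alone via \eqref{thm2eqn1}. I would then repeat the reduction carried out for Theorem \ref{thm2}: substitute the Taylor envelopes \eqref{thm2eqn2}--\eqref{thm2eqn5}, the truncated-exponential bounds \eqref{thm2eqn14}--\eqref{thm2eqn15}, and the rational bounds \eqref{thm2eqn17}--\eqref{thm2eqn20} on $h$ and $q$, each time in the direction that produces a \emph{lower} estimate of $s_1(n) g(n)^2 - 2 g(n) + 1$. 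This yields
\[
s_1(n) g(n)^2 - 2 g(n) + 1 \;\geq\; \frac{N_2(y)}{D_2(y)}
\]
for explicit $N_2, D_2 \in \mathbb{R}[y]$, and positivity reduces to $N_2(y) D_2(y) > 0$. I would extract the leading coefficient of $N_2 D_2$, verify that its sign is positive, and pin down the threshold $y_0$ via Mathematica; the residual range $30 \leq n < y_0^2/\pi^2$ is closed by direct numerical checking of \eqref{thm3eqn00}.

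The principal obstacle is sharpness. Both $s(n) \to 1$ and $g(n) \to 1$ as $n \to \infty$, and the quadratic $sX^2 - 2X + 1$ attains its global minimum $0$ at $(s, X) = (1, 1)$, so $s_1(n) g(n)^2 - 2 g(n) + 1$ is a quantity that tends to $0$ with $n$. Every rational approximation along the way must therefore be carried to high enough order in $1/y$ that the error terms do not overwhelm the leading decay; this is precisely why Lemma \ref{lemma1} retains the $\mu(n)^{-6}$ correction, and it forces the polynomials $N_2, D_2$ to be of degree comparable to the $324$ and $330$ encountered in Theorem \ref{thm2}. Calibrating the argument so that leading-term dominance already sets in for $n = 30$, rather than forcing a much larger threshold and blowing up the finite numerical check, is the delicate bookkeeping step on which the feasibility of the proof hinges.
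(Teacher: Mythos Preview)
Your proposal is essentially the paper's own argument: reduce to $s_1(n)\,g(n)^2 - 2g(n) + 1 > 0$, expand in $r,x,y,z,w$, bound the resulting exponential and algebraic factors by rational functions of $y$, and conclude via leading-coefficient positivity plus a finite numerical check. The only differences are cosmetic: the paper writes the numerator explicitly as $G(y) = -g_1 e^{r+w-2y} + g_2 e^{w+2x-3y} + g_3 - 2g_4 e^{x-2y+z} + g_5 e^{r-3y+2z}$ (so you will need truncated-exponential bounds for these four new combinations rather than literally reusing \eqref{thm2eqn14}--\eqref{thm2eqn15}), and the paper glosses over the auxiliary $g(n)<1$ step that you were right to isolate.
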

\begin{proof}
We observe that for $t \in (0,1)$, $\varphi(t)$ is an increasing function in $t$. From Corollary \ref{thm1cor} and Theorem \ref{thm2}, it suggests that we need to prove for $n \geq 91$,
\begin{equation}\label{thm3eqn1}
g(n)<\varphi(s_1(n)),
\end{equation}
or equivalently,
\begin{equation}\label{thm3eqn2}
s_1(n)g(n)^2-2g(n)+1>0.
\end{equation}
Recalling the definition of $\alpha(t)$ and $\beta(t)$ (cf. \eqref{thm2eqn16}), $s_1(n)g(n)^2-2g(n)+1$ can be written in the following form
\begin{equation}\label{thm3eqn3}
s_1(n) g(n)^2-2g(n)+1= \frac{-g_1e^{r+w-2y}+g_2e^{w+2x-3y}+g_3-2g_4e^{x-2y+z}+g_5e^{r-3y+2z}}{r^8w^8x^{16}z^{16}(x^6-x^5-1)^2(y^6-y^5-1)^4(z^6-z^5-1)^2},
\end{equation}
with 
\begin{eqnarray}
g_1&=&x^{16}y^{16}z^{16}\alpha(r)\alpha(w)\alpha(x)^2\alpha(y)^2\alpha(z)^2,\\
g_2&=&r^{8}y^{24}z^{16}\beta(w)\beta(x)^2\alpha(x)^2\beta(y)\beta(z)^2,\\
g_3&=&r^{8}w^8x^{16}z^{16}\beta(x)^2\beta(y)^4\beta(z)^2,\\
g_4&=&r^{8}w^{8}x^{8}y^{16}z^{8}\alpha(x)\beta(x)^2\beta(y)^2\alpha(z)\beta(z)^2,\\
g_5&=&w^{8}x^{16}y^{24}\beta(r)\beta(x)^2\beta(y)\beta(z)^2\alpha(z)^2.
\end{eqnarray}
Since the denominator of \eqref{thm3eqn3} is a perfect square and hence positive, therefore it is required to prove that
\begin{equation}\label{thm3eqn4}
G(y) := -g_1e^{r+w-2y}+g_2e^{w+2x-3y}+g_3-2g_4e^{x-2y+z}+g_5e^{r-3y+2z} >0.
\end{equation}
Following a similar method as used in Theorem \ref{thm2}, we first estimate the exponential terms in \eqref{thm3eqn4}. It is straightforward to observe that
\begin{eqnarray}
r_2+w_2-2y&=&-\frac{\pi^4(21\pi^8+10\pi^4y^4+8y^8)}{8y^{11}}<0,\nonumber\\
w_1+2x_1-3y&=&-\frac{\pi^4(17\pi^{10}+693\pi^8y^2-420\pi^6y^4+360\pi^4y^6-192\pi^2y^8+384y^{10})}{512y^{13}}<0\ \ \text{for}\ \ n\geq 1,\nonumber\\
x_2-2y+z_2&=& -\frac{\pi^4(21\pi^8+40\pi^4y^4+128y^8)}{512y^{11}}<0,\nonumber\\
r_1+2z_1-3y&=&-\frac{\pi^4(1088\pi^{10}+693\pi^8y^2+420\pi^6y^4+360\pi^4y^6+192\pi^2y^8+384y^{10})}{512y^{13}}<0.\nonumber
\end{eqnarray}
As a consequence, by \eqref{thm2eqn2}-\eqref{thm2eqn5} and the monotonicity property of the exponential function, for $n \geq 59$ we have
\begin{equation}\label{thm3eqn5}
e^{r+w-2y}<e^{r_2+w_2-2y}<\Phi(r_2+w_2-2y),
\end{equation}
\begin{equation}\label{thm3eqn6}
e^{x+z-2y}<e^{x_2+z_2-2y}<\Phi(x_2+z_2-2y),
\end{equation}
\begin{equation}\label{thm3eqn7}
e^{w+2x-3y}>e^{w_1+2x_1-3y}>\phi(w_1+2x_1-3y),
\end{equation}
\begin{equation}\label{thm3eqn8}
e^{r+2z-3y}>e^{r_1+2z_1-3y}>\phi(r_1+2z_1-3y).
\end{equation}
Substituting \eqref{thm3eqn5}-\eqref{thm3eqn8} into \eqref{thm3eqn4} implies that for $n \geq 59$,
\begin{equation}\label{thm3eqn9}
G(y)>-g_1\Phi(r_2+w_2-2y)+g_2\phi(w_1+2x_1-3y)+g_3-2g_4\Phi(x_2+z_2-2y)+g_5\phi(r_1+2z_1-3y).
\end{equation}
The right hand side of the above equation can be simplified further by obtaining its lower bound with the aid of employing \eqref{host} and \eqref{thm2eqn1} into the definition of $\{g_{\ell}\}_{1\leq \ell \leq5}$. More precisely, we have that for $n \geq 59$,
\begin{equation}\label{thm3eqn10}
G(y)>\frac{N_2(y)}{D_2(y)}:=\frac{\sum_{i=0}^{223}b_iy^i}{2^{101}\cdot 3^2\cdot 5 \cdot 7\ y^{119}},
\end{equation}
where $b_{223}=2^{101}\cdot 3^2\cdot 5 \cdot 7$.\\
Due to similar remark as before; i.e, if a polynomial, say $P(x)=\sum_{i=0}^{m}a_ix^i\in \mathbb{R}[x]$ of degree $m$ with its leading coefficient $a_m \in \mathbb{R}_{>0}$, then $P(x)$ is an increasing function in $x$ and consequently, $P(x)>0$ for all $x\geq x_0$ where $x_0 \in \mathbb{R}$. As an immediate consequence, we note that $G(y)>0$ by verifying that $N_2(y) >0$ for all $y \geq 5$ or equivalently for $n \geq 3$. It remains to prove \eqref{thm3eqn00} for $30\leq n\leq 91$ which is done by numerical checking in Mathematica.
\end{proof}

\section{Proof of Theorem \ref{Mainthmreform}}\label{mainthm}
\emph{Proof of Theorem \ref{Mainthmreform}}. By corollary \ref{thm1cor} and Theorem \ref{thm2}, we have $s(n)<1$ for $n \geq 91$. Define 
\begin{equation}\label{Mainthmformeqn1}
Q(t)=s(n)t^2-2t+1.
\end{equation}
To establish \eqref{Mainthmreformeqn}, we prove that for $n \geq 91$,
\begin{equation}\label{Mainthmformeqn2}
Q(u_n)>0.
\end{equation}
The quadratic equation $Q(t)=0$ has two solutions, namely
\begin{equation*}
t_0=\frac{1-\sqrt{1-s(n)}}{s(n)},\ \ \text{and}\ \ t_1=\frac{1+\sqrt{1-s(n)}}{s(n)}.
\end{equation*}
Thus $Q(t)>0$ when $t<t_0$ or $t>t_1$. From Theorem \ref{thm1} and \ref{thm3}, we have that for $n \geq 91$,
\begin{equation}\label{Mainthmformeqn3}
u_n<\varphi(s(n))
\end{equation}
Set $t_0=\varphi(s(n))$ and conclude that \eqref{Mainthmformeqn2} holds for $n \geq 91$. To confirm \eqref{Mainthmreform} for $42 \leq n\leq 91$, we can directly verify by checking numerically in Mathematica. 
\section{Conclusion}\label{Conclude}
We conclude the paper by undertaking a brief study on totally positive matrices with entries from sequences of overpartitions. Due to Engel \cite{Engel}, we know that for $n \geq 2$,
\begin{equation*}
\text{det}\ M_2(\overline{p}(n)):= \text{det}\ \begin{pmatrix}
\overline{p}(n) & \overline{p}(n+1)\\
 \overline{p}(n-1) & \overline{p}(n)\\
\end{pmatrix}>0.
\end{equation*}
 Theorem \ref{Mainthm} states that det $M_3(\overline{p}(n))>0$ for $n \geq 42$, more specifically it is worthwhile to observe that for $n\geq 3$, the determinant of each $2\times 2$ minor of the matrix $M_3(\overline{p}(n))$ is nonnegative. This construction leads to the question whether one can always construct a matrix $M_{k}(\overline{p}(n))$ of order $k$  with positive determinant if we already know its all minors (of lower order) are totally positive. More precisely,
\begin{problem}\label{prob}
For a given $k \in \mathbb{Z}_{\geq 4}$, does there exists a $n(k) \in \mathbb{Z}_{\geq 1}$ such that for $n > n(k)$,
\begin{equation}\label{probeqn}
\det\ (\overline{p}(n-i+j))_{1\leq i,j\leq k}> 0,
\end{equation}
and if \eqref{probeqn} holds true, then what is the asymptotic growth of $n(k)?$ 
\end{problem}

\begin{center}
	\textbf{Acknowledgements}
\end{center}
The author would like to express sincere gratitude to her advisor Prof. Manuel Kauers for his valuable suggestions on the paper. The research was funded by the Austrian Science Fund (FWF): W1214-N15, project DK13.

\end{document}